\documentclass[12pt,reqno,sort]{elsarticle}

\usepackage[colorlinks, citecolor=blue,linkcolor=blue, anchorcolor=blue, pagebackref]{hyperref}
\usepackage{caption,amsmath,amssymb,amsfonts,amsthm,latexsym,graphicx,multirow,color,enumerate}
\usepackage{subfigure}
\usepackage{graphicx,graphics,epstopdf}
\graphicspath{{figures/}}
\usepackage{natbib}
\usepackage[colorlinks, citecolor=blue,linkcolor=blue, anchorcolor=blue, pagebackref]{hyperref}
\textheight 23cm
\textwidth 16cm
\hoffset-1truecm
\voffset -2truecm

\usepackage[all]{xy}
\usepackage{mathrsfs}
\usepackage{longtable}
\usepackage{colortbl}
\usepackage{graphics}
\usepackage{ulem}
\usepackage{booktabs}

\def\ov{\overline} 
\def\l{\langle} \def\r{\rangle}

\def\ZZ{{\sf Z}}

\def\mod{{\sf mod~}}

\def\Aut{{\sf Aut}} 
\def\Out{{\sf Out}}
\def\Cos{{\sf Cos}}

 \def\soc{{\sf soc}}

\def\D{{\rm D}} 
\def\S{{\rm S}} 
 \def\M{{\rm M}}

\def\C{{\bf C}}\def\N{{\bf N}}

\def\calO{{\mathcal O}}

\def\Ga{{\it \Gamma}} 
\def\Sig{{\it \Sigma}}

\def\a{\alpha} \def\b{\beta}

\def\GF{{\rm GF}} \def\GL{{\rm GL}}

\def\A{{\rm A}}

\def\PSL{{\rm PSL}}  \def\PGL{{\rm PGL}}
\def\GL{{\rm GL}} \def\SL{{\rm SL}}
\def\AGL{{\rm AGL}}

  \def\D{{\rm D}}

\def\char{{\sf \ char\ }}

\def\calM{{\mathcal M}}

\def\M{{\sf M}}

\def\soc{{\rm soc}}

\def\le{\leqslant}

\def\RotaMap{{\sf RotaMap}}
\def\BiRoMap{{\sf BiRoMap}}

\def\core{{\sf core}}

\def\calO{{\mathcal O}}
\def\a{\alpha}
\def\ov{\overline}
\def\GF{{\rm GF}}

\newtheorem{theorem}{Theorem}[section]%
\newtheorem{lemma}[theorem]{Lemma}%
\newtheorem{corollary}[theorem]{Corollary}%
\newtheorem{proposition}[theorem]{Proposition}%
\newtheorem{definition}[theorem]{Definition}%
\newtheorem{problem}[theorem]{Problem}%
\newtheorem{example}[theorem]{Example}%
\newtheorem{hypothesis}[theorem]{Hypothesis}%

\setcounter{section}{0}
\def\qed{{\hfill$\Box$\smallskip}
	\medbreak}

\begin{document}
	
\begin{frontmatter}
	\title{Hall Skew-morphisms and Hall Cayley maps of finite groups \tnoteref{funding}} 
	\tnotetext[funding]{This work was supported by NSFC grants 12461061, and 11931005.}
	\author[SUSTech]{Wendi Di}
	\ead{wendyjj@zju.edu.cn}
	\author[ZZu]{Zheng Guo\corref{cor1}}
	\ead{12131227@mail.sustech.edu.cn}
	\cortext[cor1]{Corresponding author}
	\author[SUSTech]{Cai Heng Li}
	\ead{lich@sustech.edu.cn}
	
	\affiliation[ZZu]{organization={Zhengzhou University},
		addressline={No. 100, Kexue Avenue}, 
		city={Zhengzhou},
		postcode={450001}, 
		state={Henan},
		country={China}}
	\affiliation[SUSTech]{organization={Southern University of Science and Technology},
		addressline={1088 Xueyuan Avenue}, 
		city={Shenzhen},
		postcode={518055}, 
		state={Guangdong},
		country={China}}
	\begin{abstract}
		A characterization is given of finite groups $H$ that have skew-morphisms of order coprime to the order $|H|$, and their skew-morphisms. A complete classification is then given of the automorphism groups and the underlying graphs of vertex-rotary core-free Hall Cayley maps.
	\end{abstract}
	
	\begin{keyword}
		Group factorization \sep Skew-morphisms \sep Regular Cayley map \sep Rotary map 
		
		\MSC[2010] 05C25\sep 20B05\sep 20C15
	\end{keyword}
	
\end{frontmatter}
	\section{Introduction}
	
	For a group $H$, a {\it skew-morphism} of $H$ is a permutation $\rho$ on $H$ such that
	$$\rho(1)=1\text{ and }\rho(gh)=\rho(g)\rho^{\pi(g)}(h),$$
	where $g,h\in H$, and $\pi$ is an integer function on $H$.
	In particular, when $\pi(g)=1$ for each $g\in H$, the skew-morphism $\rho$ is actually an automorphism of $H$, called a {\it trivial skew-morphism}.
	The concept of skew-morphism was introduced by Jajcay and \v{S}ir\'{a}\v{n} in \cite{Jajcay2003}, in order to investigate regular Cayley maps.
	There is an equivalent definition of skew-morphism in the version of group theory, refer to \cite[Theorem 1]{Jajcay2003}.
	
	\begin{definition}\label{def:skew-morphism}
		{\rm
			For a group $H$, if there exists a group $G$ such that
			\[G=HK,\ \mbox{where $H\cap K=1$ and $K$ is cyclic and core-free in $G$,}\]
			then each generator of $K$ is called a {\it skew-morphism} of $H$.
			In this case, $G=HK$ is called a {\it skew product} of $H$ and $K$.
		}
	\end{definition}
	
Here we have some obvious examples for non-trivial skew-morphisms:
a symmetric group $\S_n$ has a skew-morphism of order $n+1$ since $\S_{n+1}=\S_n\ZZ_{n+1}$;
a dihedral group $\D_8$ has a non-trivial skew-morphism of order 3 as $\S_4=\D_8\ZZ_3$;
for an odd prime $p$, a dihedral group $\D_{2p}$ has a non-trivial skew-morphism of order $p$ since $\ZZ_p\wr\S_2=\D_{2p}\ZZ_p$.

A central problem on skew-morphisms is the determination of skew-morphisms for given families of finite groups.
The problem remains challenging, and is unsettled even for some very special families of groups although a lot of efforts have been made, refer to \cite{BachratyJajcay2017,ConderJajcayTucker2016,ConderTucker2014,DuHu2019,KN2011,KovacsNedela2017} for partial results on skew-morphisms of cyclic groups;
see \cite{KovacsMarusicMuzychuk2013,Zhang2015a,Zhang2015b,KovacsKwon2016,MR4579715} for partial results on the skew-morphisms of dihedral groups;
see \cite{DuLuoYuZhang24,DuYuLuo23} for the skew-morphisms of elementary abelian $p$-groups $\ZZ_p^n$.
Recently, the skew-morphisms of finite monolithic groups are characterized in \cite{BCMG2022}, and the skew-morphisms of finite nonabelian characteristically simple groups are characterized in \cite{ChenDuLi22}.
	
In this paper, we characterize finite groups $H$ that have skew-morphisms of the order coprime to the order $|H|$ and their skew-morphisms.
The examples come mainly from linear groups $T$ of prime dimension acting on 1-subspaces, which provides a factorization $T=HK$, where $T=\PSL(d,q)$ with $\gcd(d,q-1)=1$, and
	\[\begin{array}{l}
		H=\AGL(d-1,q)=q^{d-1}{:}\GL(d-1,q),\ \ \mbox{the stabilizer of a 1-subspace,}\\
		K=\ZZ_{\frac{q^d-1}{ q-1}}, \ \mbox{a Singer cycle}.\\
	\end{array}\]

To state our main results, we make the following hypothesis.
	
\begin{hypothesis}\label{hypo-1}
{\rm
Let $T$ be an almost simple group, associated with a parameter $e(T)$ and a factorization $T=HK$, as in the following table:

\begin{table}[h!]
\centering
\begin{tabular}{|ccccc|}
        \hline
        $T$ & $e(T)$ & $H$ & $K$ & Remark \\
        \hline
        $\A_p$, $\S_p$ & $p$ & $\A_{p-1}$,\ $\S_{p-1}$ & $\ZZ_p$ & \text{$p$ prime}\\ \hline
       \multirow{2}{*}{ \text{$\PSL(d,q){:}\l\phi\r$}} & \multirow{2}{*}{$\dfrac{q^d-1}{q-1}$ }& \multirow{2}{*}{$\AGL(d-1,q){:}\l\phi\r$ }& \multirow{2}{*}{$\ZZ_{\frac{q^d-1}{q-1}}$} & \multicolumn{1}{c|}{$d$ prime, $\gcd(d,q-1)=1$,} \\
        & & & & \multicolumn{1}{c|}{$\phi$  a field automorphism}\\ \hline
        $\PSL(2,11)$ & $11$ & $\A_5$ & $\ZZ_{11}$ & \\
        $\M_{11}$ & $11$ & $\M_{10}$ & $\ZZ_{11}$ & \\
        $\M_{23}$ & $23$ & $\M_{22}$ & $\ZZ_{23}$ & \\
\hline
\end{tabular}
\caption{}
\label{tab:1}
\end{table}

}
\end{hypothesis}
		
The first main result of this paper is stated in the following theorem.
		
\begin{theorem}\label{skewG}
Let $G=HK$ be a group factorization such that  $H$ is a Hall subgroup  and $K$ is cyclic, and  let $N$ be the core of $H$ in $G$.
Then either
\begin{itemize}
\item[\rm(1)] $G=N.(K{:}\calO)$, where $H=N.\calO$ and $\calO\leqslant\Aut(K)$, or
				
\item[\rm(2)] $G=N.(T_1\times \dots\times T_r\times K_0).\calO$,
where $\gcd(|T_i|,e(T_j))=1$ for any $i\not=j$,  $\calO\leqslant\Out(T_1)\times\dots\times\Out(T_r)\times \Aut(K_0)$,  and $T_i=H_iK_i$ is a simple group satisfying Hypothesis~$\ref{hypo-1}$ such that
\[\mbox{$H=N.(H_1\times\dots\times H_r).\calO$, and $K=K_0\times K_1\times\dots\times K_r$.}\]
\end{itemize}
\end{theorem}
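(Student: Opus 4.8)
The plan is to analyze the factorization $G=HK$ by exploiting that $H$ is a Hall subgroup, so $\gcd(|H|,|K|)=1$ since $K$ is cyclic of order dividing $|G|/|H|=|G:H|$. The first reduction is to pass to the quotient $G/N$ where $N=\core_G(H)$; here $\bar H=H/N$ is core-free in $\bar G=G/N$, and $\bar G=\bar H\bar K$ remains a factorization with $\bar K$ cyclic (an image of $K$) and $\bar H$ still a Hall subgroup. So I would first prove the result under the assumption $N=1$, i.e. that $H$ is core-free, and then lift the structure back through $N$ to obtain the $N.(\cdots)$ descriptions in (1) and (2). The genuinely core-free case is where the classification content lives.

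\medskip

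Assuming $N=1$, the next step is to examine the socle of $G$. Since $H$ is core-free, $G$ acts faithfully and primitively (or at least the factorization forces structural constraints via the factorization of almost simple and product-type groups). I would invoke the theory of factorizations $G=HK$ with $K$ cyclic: by results on maximal factorizations of almost simple groups (Liebeck--Praeger--Saxl type classifications), a cyclic factor $K$ severely restricts the possibilities. The soluble case, where no nonabelian composition factor appears, should yield conclusion (1): here $G$ is built from $N$ and a complement of the form $K{:}\calO$ with $\calO\leqslant\Aut(K)$ acting on the cyclic group, and $H=N.\calO$. The key point is that when all the ``action'' is absorbed into automorphisms of the cyclic group $K$, no simple groups from Hypothesis~\ref{hypo-1} are needed.

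\medskip

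The heart of the argument, yielding conclusion (2), is to identify the nonabelian composition factors. I would show that $\soc(\bar G/\text{(soluble radical)})$ decomposes as a product of simple groups $T_1,\dots,T_r$, each of which inherits a factorization $T_i=H_iK_i$ with $K_i$ cyclic and $H_i$ a Hall subgroup. The crucial coprimality condition $\gcd(|T_i|,e(T_j))=1$ for $i\neq j$ comes from the fact that the cyclic group $K=K_0\times K_1\times\dots\times K_r$ must have pairwise coprime direct factors (a cyclic group is a direct product of its Sylow subgroups, which are themselves cyclic), combined with matching the order $e(T_j)=|K_j|$ against the orders $|T_i|$; the Hall condition on $H$ forces these primes to be disjoint across distinct factors. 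Each individual factorization $T_i=H_iK_i$ with $K_i$ cyclic and $H_i$ Hall, where $T_i$ is simple, is then matched against Hypothesis~\ref{hypo-1} using the classification of simple groups admitting such a factorization. The main obstacle will be precisely this last matching: proving that the only simple groups $T$ with a cyclic Hall-complemented factorization $T=HK$ are exactly those in Table~\ref{tab:1}. This requires the full classification of factorizations of simple groups with a cyclic factor, checking the linear groups $\PSL(d,q)$ acting on $1$-subspaces (Singer cycle complement), the alternating groups $\A_p$, and the three sporadic/small exceptions; the delicate part is verifying that the Hall condition on $H$ combined with coprimality of $|K|$ eliminates all other candidate factorizations from the Liebeck--Praeger--Saxl tables.
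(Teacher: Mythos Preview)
Your proposal is correct and follows essentially the same architecture as the paper's proof: reduce modulo the core $N$ to the core-free case, peel off the solvable radical to obtain case~(1), and in the insoluble case analyze the socle $T_1\times\dots\times T_r$, showing that each simple factor inherits a Hall factorization $T_i=H_iK_i$ with $K_i$ cyclic, then invoke the classification of such simple factorizations. The one noteworthy difference is in the last step: where you would mine the Liebeck--Praeger--Saxl maximal-factorization tables and filter by the Hall condition, the paper instead quotes the Li--Praeger classification of almost simple permutation groups containing a regular cyclic subgroup (``$c$-groups''), which already packages exactly the list in Hypothesis~\ref{hypo-1} and makes the Hall verification a short case check; this is shorter but not conceptually different. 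One small point you glossed over is the mechanism by which a subnormal subgroup (in particular each $T_i$) inherits the Hall factorization---the paper isolates this as a standalone lemma (counting orders in the quotient $G/M$), and you should make that step explicit.
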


We remark that the numerical condition appeared in Theorem~\ref{skewG}\,(2):
\[\mbox{$\gcd(|T_i|,e(T_j))=1$ for any distinct values $i,j\in\{1,2,\dots,r\}$}\]
is very restricted.
For instance, $\{T_1,\dots,T_r\}$ contains at most one alternating group or symmetric group.
However, it is shown that there is no upper bound for the number $r$ of the direct factors $T_i$.

\begin{corollary}\label{cor:r-nobound}
For any positive integer $r$, there exist $r$ linear groups $T_i=\PSL(d_i,q_i)$ with $1\leqslant i\leqslant r$ such that
$G=T_1\times\dots\times T_r$ is a skew-product $G=H\l\rho\r$ with $\gcd(|H|,|\rho|)=1$.
\end{corollary}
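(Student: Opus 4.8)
The plan is to reduce the statement to a purely number-theoretic construction and then build the required linear groups one at a time. First I would observe that it suffices to produce $r$ simple groups $T_i=\PSL(d_i,q_i)$, each satisfying Hypothesis~\ref{hypo-1} (so $d_i$ is prime, $\gcd(d_i,q_i-1)=1$, and $T_i=H_iK_i$ with $H_i=\AGL(d_i-1,q_i)$ and $K_i=\ZZ_{e(T_i)}$ the Singer-type cyclic group of order $e(T_i)=\frac{q_i^{d_i}-1}{q_i-1}$), subject to the single condition
\[
\gcd\bigl(|T_i|,\,e(T_j)\bigr)=1\qquad\text{for all }i\neq j.
\]
Granting this, set $G=T_1\times\dots\times T_r$, $H=H_1\times\dots\times H_r$ and $K=K_1\times\dots\times K_r$. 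The displayed condition forces $\gcd(e(T_i),e(T_j))=1$, so $K$ is cyclic, say $K=\langle\rho\rangle$ with $|\rho|=\prod_i e(T_i)$; moreover $H\cap K=\prod_i(H_i\cap K_i)=1$, and $K$ is core-free in $G$ because the minimal normal subgroups of $G$ are precisely the nonabelian simple factors $T_i$, none of which lies in $K$. Comparing orders gives $|HK|=|H||K|=\prod_i|T_i|=|G|$, so $G=H\langle\rho\rangle$ is a skew product. Finally $\gcd(|H|,|\rho|)=1$ amounts to $\gcd(|H_i|,e(T_j))=1$ for all $i,j$: for $i\neq j$ this follows from the displayed condition since $|H_i|\mid|T_i|$, and for $i=j$ it is the statement that $H_i$ is a Hall subgroup, which I address next.

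The number-theoretic backbone consists of two standard facts about $T=\PSL(d,q)$ with $d$ prime and $\gcd(d,q-1)=1$. The first is that every prime divisor $\ell$ of $e(T)=\Phi_d(q)$ satisfies $\ell\equiv1\pmod d$; indeed $\ell\mid q^d-1$ forces $\mathrm{ord}_\ell(q)\in\{1,d\}$, the case $\mathrm{ord}_\ell(q)=1$ would give $\ell\mid q-1$ and hence $\ell=d$, while a short computation using $\gcd(d,q-1)=1$ shows $e(T)\equiv1\pmod d$, so $\ell=d$ is impossible. In particular $e(T)$ is coprime to $q$ and to each $q^k-1$ with $1\le k\le d-1$, which is exactly the assertion that $H_i$ is Hall and settles the case $i=j$ above. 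The second fact is that a prime $\ell$ divides $|\PSL(d,q)|$ if and only if $\ell$ is the defining characteristic or $\mathrm{ord}_\ell(q)\le d$, and that $\ell\mid e(T)$ implies $\mathrm{ord}_\ell(q)=d$.

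The construction itself I would carry out by induction, for definiteness fixing $d_i=3$ for all $i$ (so $e(T_i)=q_i^2+q_i+1$ and all $e$-primes are $\equiv1\pmod3$). Suppose $T_1,\dots,T_m$ have been chosen with all pairwise conditions holding, and let $S$ be the finite set of primes dividing $\prod_{i\le m}|T_i|$. I would choose $q_{m+1}$ to be a prime lying in a prescribed residue class modulo $3\prod_{\ell\in S}\ell$ so that: $q_{m+1}\equiv2\pmod3$ (this gives $\gcd(3,q_{m+1}-1)=1$ and guarantees $2,3\nmid e(T_{m+1})$); for each $\ell\in S$ dividing some $e(T_i)$ with $i\le m$ one has $\mathrm{ord}_\ell(q_{m+1})>3$ (whence such $\ell\nmid|T_{m+1}|$, giving $\gcd(|T_{m+1}|,e(T_i))=1$); and $\ell\nmid e(T_{m+1})$ for every $\ell\in S$ (giving $\gcd(e(T_{m+1}),|T_i|)=1$). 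Each of these excludes only boundedly many residues modulo each individual $\ell$ — the roots of $x^2+x+1$ and the elements of order at most $3$ — and primes $\ell\not\equiv1\pmod3$ impose no condition at all, since then $\ell\nmid\Phi_3(q_{m+1})$ automatically and $\ell$ never divides an $e(T_i)$. By CRT the surviving residues assemble into a class coprime to the modulus, and Dirichlet's theorem supplies a prime $q_{m+1}$ in it, completing the induction.

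The hard part is precisely the two-sided nature of the coprimality: $T_{m+1}$ must avoid the primitive ($e$-type) primes of all earlier $T_i$, even though these primes are typically large and could a priori occur in $|T_{m+1}|$ — this is what dooms the naive construction that simply enlarges the groups, and it is why the same prime $d=3$ may be reused. The mechanism that resolves it is forcing $\mathrm{ord}_\ell(q_{m+1})>3$ for those primes, which is feasible because each such $\ell\equiv1\pmod3$ has $3\mid\ell-1$ and, being at least the least prime $\equiv1\pmod3$ (namely $7$), admits elements of order exceeding $3$. The single delicate verification is that the finitely many forbidden residues never exhaust the multiplicative group $(\Z/\ell\Z)^{\times}$; this is tightest at $\ell=7$, where the admissible residues are exactly $\{3,5\}$ and the check is immediate, and it is slack for all larger primes.
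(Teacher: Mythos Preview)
Your proposal is correct and takes a genuinely different route from the paper. The paper (via Proposition~\ref{any r} and Example~\ref{ex:r-nobound}) fixes a single characteristic $p$ and lets the dimensions $d_i$ run over the primes in an interval $(d,d^2)$, setting $T_i=\PSL(d_i,p^{d_i})$; the coprimality conditions then reduce to the identity $\gcd(p^a-1,p^b-1)=p^{\gcd(a,b)}-1$, and the Prime Number Theorem furnishes arbitrarily many such $d_i$. You do the opposite: fix the dimension at $d=3$ and vary the characteristic, choosing the primes $q_i$ one at a time by congruence conditions and Dirichlet. The paper's argument is more explicit and uses only elementary arithmetic plus PNT, while yours is inductive and needs Dirichlet; on the other hand, your construction allows the smallest factor to be held fixed (one may start with $T_1=\PSL(3,2)$ and keep extending), whereas the paper's construction forces $|T_1|\to\infty$ as $r\to\infty$ --- exactly the phenomenon the authors single out as open in Problem~\ref{q:r-bound}. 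So your approach not only proves the corollary but already points towards that problem.
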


In the proof of Corollary~\ref{cor:r-nobound}, examples for $G=T_1\times\dots\times T_r$ with $|T_1|<\dots<|T_r|$ are constructed for arbitrarily large $r$.
However, the known examples are such that
\[\mbox{$|T_1|\to\infty$ when $r\to\infty$.}\]
This leads to a natural problem.

\begin{problem}\label{q:r-bound}
{\rm
Characterize linear groups $T_i=\PSL(d_i,q_i)$ with $1\leqslant  i\leqslant r$ with $|T_1|<\dots<|T_r|$ and $|T_1|$ 
upper-bounded such that $G=T_1\times\dots\times T_r=H\l\rho\r$ with $\gcd(|H|,|\rho|)=1$.
}
\end{problem}

A skew-morphism $\rho$ of a group $H$ is called a {\it Hall skew-morphism} if the order $|H|$ is coprime to the order $|\rho|$.
Then Theorem~\ref{skewG} has the following consequnce.

\begin{theorem}\label{thm:skew-G}
A finite group $H$ has a Hall skew-morphism $\rho$ if and only if
\[H=N.(H_0\times H_1\times\dots\times H_r).\calO,\]
where $\calO$ is as in Theorem~\ref{skewG}, $\gcd\left(|N||\calO|,|\rho|\right)=1$, and either $H_i=1$ or
\begin{enumerate}
\item[\rm(i)] $(H_0,\ell_0)=(\A_{p-1},p)$, $(\A_5,11)$, $(\M_{10},11)$, $(\M_{22},23)$, $(\A_5\times \A_6,11\times 7)$, $(\M_{10}\times \A_6,11\times 7)$, $(\M_{22}\times \A_{12},23\times13)$, $(\M_{22}\times \A_{16},23\times17)$, or $(\M_{22}\times \A_{18},23\times19)$;

\item[\rm(ii)] $(H_i,\ell_i)=(\AGL(d_i,q_i),{q_i^{d_i+1}-1\over q_i-1})$ with $1\leqslant i\leqslant r$.
\end{enumerate}
Further, $\gcd(\ell_i|H_i|,\ell_j)=1$  for any distinct $i,j\in\{1,2,\dots,r\}$, and $|\rho|=\ell_0\ell_1\dots\ell_r$. \end{theorem}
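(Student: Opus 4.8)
The plan is to deduce Theorem~\ref{thm:skew-G} from Theorem~\ref{skewG} by imposing the Hall hypothesis $\gcd(|H|,|\rho|)=1$ and then running a finite numerical analysis on the admissible simple factors. For the necessity direction, put $K=\langle\rho\rangle$ and $G=HK$. Since $\gcd(|H|,|K|)=1$, the subgroup $H$ is automatically a Hall subgroup of $G$ and $K$ is cyclic and core-free, so Theorem~\ref{skewG} applies. Case~(1) is degenerate and is recorded by $H_0=H_1=\dots=H_r=1$. In case~(2) every simple factor $T_i=H_iK_i$ satisfies Hypothesis~\ref{hypo-1}, hence is one of the groups of Table~\ref{tab:1}; writing $\ell_i=e(T_i)=|K_i|$ we have $|\rho|=|K|=\ell_0\ell_1\cdots\ell_r$. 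The Hall hypothesis then yields at once $\gcd(|N||\calO|,|\rho|)=1$ together with the internal conditions $\gcd(|H_i|,\ell_i)=1$.

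I would first check that this internal coprimality holds for every row of Table~\ref{tab:1}, so that no entry is excluded. For $\A_p$ and $\S_p$ it is $\gcd((p-1)!,p)=1$; for $\PSL(2,11)$, $\M_{11}$ and $\M_{23}$ it is the direct inspection $\gcd(60,11)=\gcd(720,11)=\gcd(|\M_{22}|,23)=1$; and for $\PSL(d,q)$ it follows because a prime dividing both $(q^d-1)/(q-1)$ and $|\AGL(d-1,q)|$ must divide some $q^i-1$ with $i<d$, so the order of $q$ modulo it divides both $i<d$ and $d$; as $d$ is prime this order is $1$, forcing the prime to equal $d$ and to divide $q-1$, against $\gcd(d,q-1)=1$.

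The crux is to decide which families of factors occur together, subject to the condition $\gcd(|T_i|,e(T_j))=1$ for $i\neq j$ supplied by Theorem~\ref{skewG}(2). I would gather the linear factors into $H_1,\dots,H_r$ and the remaining ``exceptional'' factors, of types $\A_p$, $\PSL(2,11)$, $\M_{11}$, $\M_{23}$, into $H_0$, and classify the latter by three observations. No two alternating factors $\A_p,\A_{p'}$ with $p<p'$ coexist, since then $e(\A_p)=p$ divides $|\A_{p'}|$. No two of $\PSL(2,11),\M_{11},\M_{23}$ coexist, since in any such pair one factor has $e=11$ while $11$ divides the order of the other. Finally, a single $S\in\{\PSL(2,11),\M_{11},\M_{23}\}$ can accompany a single $\A_p$ exactly when $p<e(S)$ and $p\nmid|S|$; this leaves $p=7$ for $S=\PSL(2,11)$ and $S=\M_{11}$, and $p\in\{13,17,19\}$ for $S=\M_{23}$. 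These constraints yield precisely the nine shapes of (i), the linear factors give (ii), and the condition $\gcd(\ell_i|H_i|,\ell_j)=1$ together with $|\rho|=\ell_0\cdots\ell_r$ is read off.

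For the sufficiency direction I would reverse the construction, as is done for the linear factors in Corollary~\ref{cor:r-nobound}: starting from $H$ of the stated shape, take the factorizations $T_i=H_iK_i$ of Table~\ref{tab:1}, build $G=N.(T_1\times\dots\times T_r\times K_0).\calO$ as in Theorem~\ref{skewG}(2), and let $\rho$ generate $K=K_0\times K_1\times\dots\times K_r$. Pairwise coprimality of the $\ell_i$ makes $K$ cyclic; core-freeness of $K$ in $G$ follows from core-freeness of each $K_i$ in $T_i$ and the fact that pairwise coprime orders forbid the direct factors from being permuted; and $\gcd(|N||\calO|,|\rho|)=1$ with the internal conditions gives $\gcd(|H|,|\rho|)=1$. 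I expect the main obstacle to be the exhaustive numerical analysis isolating exactly the nine exceptional shapes in (i), together with the verification that $K$ remains core-free after passing to the direct product and the extensions by $N$ and $\calO$.
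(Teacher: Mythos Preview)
Your proposal is correct and follows essentially the same route as the paper: apply Theorem~\ref{skewG}, separate the simple factors of Table~\ref{tab:1} into the linear $\PSL(d,q)$ factors and the ``exceptional'' ones $\A_p,\PSL(2,11),\M_{11},\M_{23}$, and then carry out exactly the same finite numerical analysis (at most one $\A_p$, no two of $\PSL(2,11),\M_{11},\M_{23}$, and the compatibility check between one $\A_p$ and one sporadic factor) to arrive at the nine shapes in~(i). Your write-up is in fact slightly more complete than the paper's, since you also sketch the sufficiency direction and spell out the internal coprimality $\gcd(|H_i|,\ell_i)=1$ for the $\PSL(d,q)$ row, whereas the paper only records the necessity argument and relies on the computation in Lemma~\ref{2.6} for the latter.
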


We observe that the triples $(T,H,K)$ listed in Hypothesis~\ref{hypo-1} with $H$ solvable are as follows:
		\[(\SL(3,2),\S_4,7), (\PSL(3,3),\AGL(2,3),13), (\SL(2,2^f),\AGL(1,2^f),2^f+1).\]
This leads to the following consequence of Theorem~\ref{skewG}, which determines Hall skew-morphisms of finite solvable groups.

\begin{corollary}\label{cor:solvable}
Let $G=HK$ be a factorization such that $H$ is a solvable Hall subgroup of $G$ and $K$ is cyclic.
Let $N$ be the core of $H$ in $G$.
Then either
\begin{itemize}
\item[\rm(1)] $G=N.(K{:}\calO)$, and $H=N.\calO$ with $\calO\leq \Aut(K)$ abelian, or
				
\item[\rm(2)] $G=N.(E\times K_0).\calO$, where $K_0<K$ and $\calO\leqslant \Out(E)\times\Aut(K_0)$, and either
\begin{itemize}
\item [\rm(i)] $E=\SL(3,2)$, $\PSL(3,3)$, or $\SL(2,2^f)$, or

\item [\rm(ii)] $E\lhd \SL(3,2)\times \PSL(3,3)\times \SL(2,2^f)$, where $f\equiv 2,4\pmod 6$.
\end{itemize}
\end{itemize}
\end{corollary}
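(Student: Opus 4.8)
The plan is to specialize Theorem~\ref{skewG} to the solvable case, read off the two alternatives, and then prune the list of simple factors $T_i$ using solvability together with the pairwise coprimality condition $\gcd(|T_i|,e(T_j))=1$. First I would invoke Theorem~\ref{skewG}, which applies since $H$ is in particular a Hall subgroup and $K$ is cyclic. If alternative~(1) of that theorem holds, then $G=N.(K{:}\calO)$ and $H=N.\calO$ with $\calO\leqslant\Aut(K)$; because $K$ is cyclic, $\Aut(K)$ is abelian, hence so is $\calO$, which is exactly alternative~(1) of the corollary.

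It remains to treat alternative~(2), where $G=N.(T_1\times\dots\times T_r\times K_0).\calO$ and $H=N.(H_1\times\dots\times H_r).\calO$ with each $T_i=H_iK_i$ simple and satisfying Hypothesis~\ref{hypo-1}, and $r\geqslant1$. The first step is to use solvability to cut down the possible $T_i$: since $H$ is solvable and $H_1\times\dots\times H_r$ is a section of $H$, each $H_i$ is solvable. Scanning Hypothesis~\ref{hypo-1}, the factorizations $T_i=H_iK_i$ with $H_i$ solvable are precisely the three families recorded in the observation preceding the corollary, namely $(\SL(3,2),\S_4,7)$, $(\PSL(3,3),\AGL(2,3),13)$ and $(\SL(2,2^f),\AGL(1,2^f),2^f+1)$; here $\A_5=\SL(2,4)$ is absorbed into the last family, while $\PSL(2,11),\M_{11},\M_{23}$ are discarded because $\A_5,\M_{10},\M_{22}$ are insolvable, and $\A_p,\S_p$ and $\PSL(d,q)$ with $d\geqslant5$ give insolvable point stabilizers. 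Thus each $T_i\in\{\SL(3,2),\PSL(3,3),\SL(2,2^f)\}$, and setting $E=T_1\times\dots\times T_r$ yields $G=N.(E\times K_0).\calO$ with $K_0<K$ (as every $K_i\neq1$).

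The second, and main, step is to show that $E$ embeds as a normal subgroup of $\SL(3,2)\times\PSL(3,3)\times\SL(2,2^f)$; equivalently, that no two of the $T_i$ are isomorphic. For $\SL(3,2)$ and $\PSL(3,3)$ this is immediate from $\gcd(168,7)=7$ and $\gcd(5616,13)=13$, so $\gcd(|T_i|,e(T_j))=1$ forbids repetition. The delicate case is two factors $\SL(2,2^{f_1}),\SL(2,2^{f_2})$ with $f_1\neq f_2$: I would set $g=\gcd(f_1,f_2)$ and observe that the condition $\gcd(2^{2f_2}-1,2^{f_1}+1)=1$ (part of $\gcd(|T_2|,e(T_1))=1$, since $e(T_1)=2^{f_1}+1$ is odd) forces $f_1/g$ to be even, for otherwise $2^g+1$ divides both $2^{f_1}+1$ and $2^{2g}-1\mid 2^{2f_2}-1$; symmetrically $\gcd(|T_1|,e(T_2))=1$ forces $f_2/g$ even. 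But $f_1/g$ and $f_2/g$ are coprime, so they cannot both be even, a contradiction. Hence at most one $\SL(2,2^f)$ occurs, giving $E\lhd\SL(3,2)\times\PSL(3,3)\times\SL(2,2^f)$; since the simple factors of $E$ are then pairwise non-isomorphic, $\Out(E)=\Out(T_1)\times\dots\times\Out(T_r)$ and so $\calO\leqslant\Out(E)\times\Aut(K_0)$. This places us in alternative~(2)(i) when $E$ is simple and in (2)(ii) otherwise.

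Finally I would pin down the congruence $f\equiv2,4\pmod6$ in (2)(ii), which is forced exactly when an $\SL(2,2^f)$ factor coexists with $\SL(3,2)$ or $\PSL(3,3)$. Using that the order of $2$ modulo $7$ is $3$, the computations $\gcd(168,2^f+1)$ and $\gcd(2^f(2^{2f}-1),7)$ show $\SL(2,2^f)$ and $\SL(3,2)$ are compatible iff $f$ is even and $3\nmid f$; using that the order of $2$ modulo $13$ is $12$, the computations $\gcd(5616,2^f+1)$ and $\gcd(2^f(2^{2f}-1),13)$ show $\SL(2,2^f)$ and $\PSL(3,3)$ are compatible iff $f$ is even and $6\nmid f$; in either case this is precisely $f\equiv2,4\pmod6$. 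The main obstacle throughout is this last pair of steps: the number-theoretic verification that the coprimality condition both excludes repeated $\SL(2,2^f)$ factors and fixes the residue of $f$, for which the divisibility facts $2^a+1\mid 2^b+1$ for $b/a$ odd, $\gcd(2^a-1,2^b-1)=2^{\gcd(a,b)}-1$, and the multiplicative orders of $2$ modulo $7$ and $13$ are the essential tools.
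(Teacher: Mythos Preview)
Your proof is correct and follows essentially the same route as the paper: invoke Theorem~\ref{skewG}, use solvability of $H$ to restrict each $T_i$ to $\SL(3,2)$, $\PSL(3,3)$ or $\SL(2,2^f)$, then use the coprimality conditions $\gcd(|T_i|,e(T_j))=1$ to bound the number of factors and pin down $f\bmod 6$. One small expository slip: you write that it suffices ``that no two of the $T_i$ are isomorphic'', but in fact two factors $\SL(2,2^{f_1})$ and $\SL(2,2^{f_2})$ with $f_1\neq f_2$ are already non-isomorphic and must still be excluded --- which you do correctly in the next sentence.

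The one genuine difference from the paper is your argument for ruling out two factors of type $\SL(2,2^{f_i})$. The paper argues via the chain
\[
2^{2\gcd(e,f)}-1=\gcd(2^{2e}-1,2^{2f}-1)\leqslant\prod_{\varepsilon,\varepsilon'\in\{\pm1\}}\gcd(2^e+\varepsilon,2^f+\varepsilon')=2^{\gcd(e,f)}-1,
\]
a contradiction. Your argument instead sets $g=\gcd(f_1,f_2)$ and shows that $\gcd(2^{2f_2}-1,2^{f_1}+1)=1$ forces $f_1/g$ even (else $2^g+1$ is a common factor), and symmetrically $f_2/g$ even, contradicting $\gcd(f_1/g,f_2/g)=1$. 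Both arguments are short and elementary; yours is perhaps more transparent about \emph{why} the obstruction arises, while the paper's inequality is a neat one-liner.
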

		
Next, we apply Theorem~\ref{skewG} to study a class of highly symmetric maps.

Let $\calM=(V,E,F)$ be a map, with vertex set $V$, edge set $E$ and face set $F$.
A {\it flag} $(\a,e,f)$ of a map is an incident triple of vertex $\a$, edge $e$ and face $f$.
A map $\calM$ is called {\it regular} if the automorphism group $\Aut\calM$ is regular on the flag set of $\calM$.
Regular maps have the highest symmetry degree, and slightly lower symmetrical maps include arc-transitive maps and edge transitive maps, which have received considerable attention in the literature, see \cite{Reg-maps,Edge-t-maps,Li-2008} and references therein.
In this paper, we study two classes of arc-transitive maps, defined below.

For an edge $e = [\a, e, \a']$, the two faces of $\calM$ incident with $e$ is denoted by $f$ and $f'$.
For a subgroup $G\leqslant\Aut\calM$, the map $\calM$ is called {\it $G$-vertex-rotary} if $G$ is arc-regular on $\calM$ and the vertex stabilizer $G_\a=\l \rho\r$ is cyclic.
In this case, $G$ contains an involution $z$ such that $G=\l \rho,z\r$.
We call the pair $(\rho,z)$ a {\it rotary pair} of $G$.
With such a rotary pair $(\rho,z)$, we have a {\it coset graph}
		\[\Ga=\Cos(G,\l \rho\r,\l \rho \r z\l \rho\r),\]
which has vertex set $V=[G:\l \rho\r]$ such that
\[\mbox{$\l \rho\r x$ and $\l \rho\r y$ are adjacent if and only if $yx^{-1}\in \l \rho\r z\l \rho\r$.}\]
The vertex stabilizer $G_\a=\l\rho\r$ acts regularly on $E(\a)$, the edge set incident with $\a$.
The graph $\Cos(G,\l\rho\r,\l \rho\r z\l \rho\r)$ has {\it vertex-rotary} embeddings, which are divided into
two different types according to the action of $z$ on the two faces $f,f'$ which are incident with the edge $e$, see \cite{LPS}.
That is to say, either
\begin{itemize}
\item $z$ interchanges $f$ and $f'$, and $\calM$ is {\it $G$-rotary} (also called {\it orientably regular}),
denoted by $\RotaMap(G,\rho,z)$, or

\item $z$ fixes both $f$ and $f'$, and $\calM$ is {\it $G$-bi-rotary }, denoted by $\BiRoMap(G,\rho,z)$.
\end{itemize}

A map $\calM$ is called a {\it Cayley map} of a group $H$ if $\Aut\calM$ contains a subgroup which is isomorphic to $H$ and regular on the vertex set $V$.
The study of Cayley maps has been an active research topic in algebraic and topological graph theory for a long time, refer to \cite{Li-2006,Cay-maps,Balanced-Cay,Balanced-Cay-2} and reference therein.
As an application of Theorem~\ref{skewG}, we focus us on a special class of Cayley maps.
A Cayley map $\calM$ of $H$ is called a {\it Hall Cayley map} of $H$ if $H$ is isomorphic to a Hall subgroup of $\Aut\calM$, and called a {\it core-free Cayley map} if $H$ is core-free in $\Aut\calM$.

The following theorem presents a classification for the automorphism groups and underlying graphs of vertex-rotary maps which are core-free Hall Cayley maps.
We first determine almost simple groups which are vertex-rotary on a Hall Cayley map, and then decompose the general case into the almost simple ones by `direct product' and `bi-direct product', defined before Lemma~\ref{lem:productmap}.
The classification is stated in the following theorem.

\begin{theorem}\label{thm:maps}
Let $\calM$ be a $G$-vertex-rotary map.
Then $\calM$ is a core-free Hall Cayley map if and only if
\[G=\left((T_1\times\dots\times T_s){:}\l z_1\dots z_s\r\right)\times T_{s+1}\times\dots\times T_r,\ \mbox{for some $0\le s\le r$},\]
where $T_i$ is a simple group  in Hypothesis~$\ref{hypo-1}$ with $\gcd(|T_i|,e(T_j))=1$ for any $i\not=j$, and $z_i\le \Out(T_i)$ is  of order $2$.
Moreover, $\calM$ has underlying  graph $$\Ga=\left(\Ga_1\times_{\rm{bi}}\Ga_2\times_{\rm{bi}}\cdots\times_{\rm{bi}}\Ga_s\right)\times \left(\Ga_{s+1}\times\cdots\times\Ga_r\right),$$
where $\Ga_i=\Cos(T_i{:}\l z_i\r,\l\rho_i\r,\l\rho_i\r z_i\l\rho_i\r)$ for $i\leqslant s$, or  $\Cos(T_j,\l\rho_j\r,\l\rho_j\r z_j\l\rho_j\r)$ for $j>s$.
\end{theorem}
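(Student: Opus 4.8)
The plan is to turn the map-theoretic statement into the group factorization handled by Theorem~\ref{skewG}, and then to reconstruct the map from the group. First I would record the standing data of a $G$-vertex-rotary map: a rotary pair $(\rho,z)$ with $G=\l\rho,z\r$, a cyclic vertex stabiliser $K:=G_\a=\l\rho\r$, and a faithful action of $G$ on the connected vertex set $V=[G:K]$; the latter forces $\core_G(K)=1$, i.e. $K$ is core-free in $G$. If in addition $\calM$ is a core-free Hall Cayley map, then the regular subgroup $H$ may be taken inside $G$ (this is the point where one uses that $\calM$ is simultaneously $G$-vertex-rotary and a Cayley map), yielding a factorization $G=HK$ with $H\cap K=1$. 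The Hall and core-free hypotheses transfer to $\gcd(|H|,|K|)=1$ and $\core_G(H)=1$, so $\rho$ is a Hall skew-morphism of $H$ and Theorem~\ref{skewG} applies with $N=\core_G(H)=1$.

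Applying Theorem~\ref{skewG}, case~(1) gives $G=K{:}\calO$ with $H\cong\calO\le\Aut(K)$ abelian; since $\rho\in K$ we have $\bar\rho=1$ in $G/K=\calO$, so $G=\l\rho,z\r$ forces $\calO=\l\bar z\r$ of order at most $2$, whence $|V|=[G:K]\le 2$ and we obtain only the degenerate dipole maps, corresponding to the empty product $r=0$. The substantive case is~(2): $G=(T_1\times\dots\times T_r\times K_0).\calO$. Here $K_0\le K$ is a direct factor centralised by every $T_i$ and normalised by $\calO\le\dots\times\Aut(K_0)$, hence $K_0\lhd G$; core-freeness of $K$ then gives $K_0=1$. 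Consequently $\rho\in K=K_1\times\dots\times K_r\le T_1\times\dots\times T_r$, so again $\bar\rho=1$ in $\calO$ and $\calO=\l\bar z\r$ has order at most $2$.

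Next I would analyse the single involution $z$ factorwise. Writing $z=z_1\cdots z_r$ through the projections, and using that $z_j$ centralises $T_i$ for $j\ne i$, conjugation by $z$ restricts on the $i$-th factor to conjugation by $z_i$. If $\calO=1$ then every $z_i\in T_i$ is inner and $s=0$; if $\calO=\ZZ_2$ then $\bar z$ is outer on a nonempty set of factors, which after relabelling are $T_1,\dots,T_s$, with $z_i\in\Out(T_i)$ of order $2$ for $i\le s$ and $z_j\in T_j$ inner for $j>s$. This produces exactly the claimed shape $G=((T_1\times\dots\times T_s){:}\l z_1\cdots z_s\r)\times T_{s+1}\times\dots\times T_r$, the coprimality $\gcd(|T_i|,e(T_j))=1$ being inherited verbatim from Theorem~\ref{skewG}(2). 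One checks along the way that $e(T_i)$ is odd for every group in Hypothesis~\ref{hypo-1} (the constraint $\gcd(d,q-1)=1$ makes $1+q+\dots+q^{d-1}$ odd), which is what makes $H=(\prod_i H_i).\calO$ a Hall subgroup even when $|\calO|=2$.

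For the converse I would set $\rho=\rho_1\cdots\rho_r$ and $z=(z_1\cdots z_s)z_{s+1}\cdots z_r$ and verify $G=\l\rho,z\r$: since the orders $e(T_i)$ are pairwise coprime, $\l\rho\r=\prod_i\l\rho_i\r$ and each $\rho_i$ is recovered as a power of $\rho$; projecting to each factor together with a Goursat argument (the $T_i$ are pairwise non-isomorphic, because $\gcd(|T_i|,e(T_j))=1$ forbids $T_i\cong T_j$) shows $\l\rho,z\r$ meets every factor fully, while inside a factor $\l\rho_i,\rho_i^{z_i}\r=T_i$ since $(\rho_i,z_i)$ is a rotary pair of $T_i$. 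The subgroup $H=(\prod_i H_i).\calO$ is then regular on $V$, Hall (using that $\prod_i e(T_i)$ is odd) and core-free. Finally, the identification of the underlying graph is obtained by feeding the direct decomposition of $G$ and of the double coset $\l\rho\r z\l\rho\r$ into the (bi-)direct product construction of Lemma~\ref{lem:productmap}: an inner $z_j$ $(j>s)$ yields a direct factor $\Cos(T_j,\l\rho_j\r,\l\rho_j\r z_j\l\rho_j\r)$, and an outer $z_i$ $(i\le s)$ yields a bi-direct factor $\Cos(T_i{:}\l z_i\r,\l\rho_i\r,\l\rho_i\r z_i\l\rho_i\r)$. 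I expect the main obstacle to be exactly this last step --- controlling how $\l\rho\r z\l\rho\r$ distributes across the direct factors and matching it against $\times$ versus $\times_{\rm bi}$ --- together with the bookkeeping required to separate the inner and outer parts of the single involution $z$ while keeping the connectivity condition $G=\l\rho,z\r$ intact.
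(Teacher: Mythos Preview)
Your approach is essentially the paper's: reduce to a core-free Hall factorization $G=HK$ with $K=\l\rho\r$ cyclic, apply Theorem~\ref{skewG} with $N=1$, kill $K_0$ via core-freeness of $K$, bound $\calO$ by $\ZZ_2$ using $\rho\in\soc(G)$, decompose $z$ factorwise to split off the outer coordinates, and finish with Lemma~\ref{lem:productmap}. The paper's argument for $|\calO|\le2$ (Lemma~\ref{lem:O=<2}) is phrased map-theoretically (the socle is edge-transitive since it contains every vertex stabiliser), but this is equivalent to your observation that $\bar\rho=1$ in $\calO$. Your remark that every $e(T_i)$ is odd, needed so that $H=(\prod_iH_i).\calO$ remains Hall when $|\calO|=2$, is correct and worth stating; the paper leaves it implicit.

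The one place where you are lighter than the paper is the converse. You write ``since $(\rho_i,z_i)$ is a rotary pair of $T_i$'' and proceed, but the existence of such pairs in each almost simple factor is exactly what the paper establishes in Examples~\ref{ex:S(p)}--\ref{ex:PSL.2}, together with Lemma~\ref{lem:O=2} which pins down the admissible order-$2$ outer automorphisms $z_i$ (an odd permutation for $\A_p$, or $x\phi$ with $\phi$ a field involution and $x^\phi=x^{-1}$ for $\PSL(d,q)$). This step is not a formality: in $\M_{11}$ one must choose $z$ outside the unique $\PSL(2,11)$ overgroup of $\l\rho\r$, and in $\PSL(2,q)$ one must take $z\notin\N_T(K)\cong\D_{2(q+1)}$. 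So expect a short case-by-case verification here rather than a one-line appeal. A smaller point: the passage from ``$H$ core-free in $\Aut\calM$'' to ``$H$ core-free in $G$'' uses that $\Aut\calM=G$ or $G{:}2$ (the paper spells this out at the start of Section~\ref{sec:3}); your parenthetical ``this is the point where one uses\dots'' is right in spirit but would benefit from that one extra sentence.
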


In the subsequent article \cite{Hall-maps}, a characterization and enumeration will be given for vertex-rotary core-free Hall Cayley maps.


\section{Hall factorizations and skew-morphisms}
		
In this section, we prove Theorems~\ref{skewG} and \ref{thm:skew-G} and their corollaries.

We first establish some useful lemmas.
A group factorization $G=HK$ is called a {\it Hall factorization} if $H$ or $K$ is a Hall subgroup of $G$.
The following lemma states that a Hall factorization can be inherited by its subnormal subgroups.
(Recall that a subgroup $M<G$ is a {\it subnormal subgroup} of $G$ if there exist subgroup sequence $M=M_n\lhd M_{n-1}\lhd\dots\lhd M_1\lhd G$.)
		
		\begin{lemma}\label{subnormal}
			Let $G=HK$ be a Hall factorization and $M$ a subnormal subgroup of $G$.
			Then $M=(M\cap H)(M\cap K)$ is a Hall factorization.
		\end{lemma}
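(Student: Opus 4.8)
The plan is to reduce to the case where $M$ is normal in $G$, and then to prove that case through a single modular-law identity, $HM\cap KM=(H\cap K)M$, which is precisely where the Hall hypothesis is used in an essential way.

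For the reduction, I induct on the length $\ell$ of a subnormal chain $M=M_\ell\lhd\cdots\lhd M_1\lhd M_0=G$. Once the normal case is available, applying it to $M_1\lhd G$ gives $M_1=(M_1\cap H)(M_1\cap K)$, and this is again a Hall factorization because the factor $M_1\cap H$ will be shown to be a Hall subgroup of $M_1$. Since $M\lhd\cdots\lhd M_1$ is a shorter subnormal chain inside the group $M_1$, the inductive hypothesis applied to $M_1$ with this factorization yields $M=(M\cap(M_1\cap H))(M\cap(M_1\cap K))=(M\cap H)(M\cap K)$, using $M\le M_1$. So it suffices to treat $M\lhd G$. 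Interchanging the roles of $H$ and $K$ if necessary (note that $M=(M\cap H)(M\cap K)$ holds iff $M=(M\cap K)(M\cap H)$, by inversion), I may assume $H$ is the Hall subgroup; set $\pi=\pi(H)$, so that $[G:H]$ is a $\pi'$-number.

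Two elementary facts come first. Since $[M:M\cap H]=[MH:H]$ divides $[G:H]$, a $\pi'$-number, while $|M\cap H|$ is a $\pi$-number, the subgroup $M\cap H$ is a Hall $\pi$-subgroup of $M$; this simultaneously records that the resulting factorization of $M$ is Hall. Second, $G=HK$ forces $[K:H\cap K]=[G:H]$, again a $\pi'$-number. The heart of the argument is then the identity $HM\cap KM=(H\cap K)M$. The inclusion $\supseteq$ is automatic, so I pass to $\overline G=G/M$ and show $\overline H\cap\overline K=\overline{H\cap K}$, where bars denote images. Now $\overline{H\cap K}$ is a $\pi$-group, and its index in $\overline K$ equals $[K:H\cap K]/[K\cap M:(K\cap M)\cap H]$, which divides $[K:H\cap K]$ and is hence a $\pi'$-number; thus $\overline{H\cap K}$ is a Hall $\pi$-subgroup of $\overline K$. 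Since $\overline H\cap\overline K$ is a $\pi$-subgroup of $\overline K$ containing $\overline{H\cap K}$, the two must coincide, giving the identity. I expect this to be the main obstacle: a direct order count for $M=(M\cap H)(M\cap K)$ is self-referential, since that equality is literally equivalent to $HM\cap KM=(H\cap K)M$, so the Hall hypothesis has to be used structurally, exactly to force the two $\pi$-subgroups of $\overline K$ to agree.

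Finally I convert the identity into the factorization by a direct peeling argument. Given $n\in M$, write $n=hk$ with $h\in H$ and $k\in K$. Then $k=h^{-1}n\in HM$ and $k\in KM$, so $k\in HM\cap KM=(H\cap K)M$; write $k=cn_0$ with $c\in H\cap K$ and $n_0\in M$. Since $c,k\in K$, we get $n_0=c^{-1}k\in K\cap M$; and $hc\in H$ together with $hc=nn_0^{-1}\in M$ forces $hc\in H\cap M$. Hence $n=(hc)n_0\in(M\cap H)(M\cap K)$, which gives $M\subseteq(M\cap H)(M\cap K)$; the reverse inclusion is clear. This settles the normal case, and with the induction above it completes the lemma.
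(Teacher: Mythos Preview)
Your proof is correct. Both you and the paper reduce to the normal case by induction on the length of a subnormal chain, but for $M\lhd G$ the arguments diverge. The paper works under the tacit convention that a Hall factorization has $\gcd(|H|,|K|)=1$ (this is used explicitly in the proof of Lemma~\ref{lem:quotient}); with that in hand, $|\ov H|$ and $|\ov K|$ are automatically coprime, so $\ov H\cap\ov K=1$ for free, and equating $|\ov G|=|\ov H|\,|\ov K|$ with $|\ov G|=|H|\,|K|/|M|$ yields $|M|=|M\cap H|\,|M\cap K|$ in two lines with no element chasing. Your route through the modular identity $HM\cap KM=(H\cap K)M$ and the peeling argument is longer, but it requires only that \emph{one} of $H,K$ be a Hall subgroup, which is the literal definition given just before the lemma. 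Thus your worry that a direct order count would be ``self-referential'' is justified under the weaker hypothesis you chose to work with, but disappears once $\gcd(|H|,|K|)=1$ is assumed; conversely, your argument proves the lemma in that slightly more general form.
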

		
\begin{proof}
Since $M$ is a subnormal subgroup of $G$, we can assume that $M=M_n\lhd M_{n-1}\lhd\dots\lhd M_1\lhd G$ for some positive integer $n$.
(The proof will be proceeded by induction on $n$.)
			
			For $n=1$, we have $M\lhd G$.
			Since $G=HK$ is a Hall factorization, we have $H\cap K=1$.
			Noting that $(M\cap H)(M\cap K)\le M$ and
			\[
			(M\cap H)\cap (M\cap K)=M\cap H\cap K=1,
			\]
			we only need to show
			\[
			|M|=|M\cap G|\cdot |M\cap K|.
			\]
Set $\ov{G}=G/M$, $\ov{H}=HM/M$, and $\ov{K}=KM/M$. Then $\ov{G} =\ov{H}\ov{K}$ is also a Hall factorization.
			From $\ov{H}\cong H/(M \cap H)$ and $\ov{K} \cong K/(M\cap K)$, we obtain
			\[
			|\ov{G}|=|\ov{G}|\cdot|\ov{K}|=\frac{|G|}{|M\cap H|}\cdot\frac{|K|}{|M\cap K|}.
			\]
			On the other hand, since $|G|=|H|\cdot |K|$, we have $|\ov{G}|=\frac{|H|\cdot|K|}{|M|}$.
			It follows that
			\[
			|M|=|M\cap H|\cdot |M\cap K|,
			\]
			and thus $M=(M\cap H)(M\cap K)$.
			
Now suppose $n>1$ and that $M_{n-1}=(M_{n-1}\cap H)(M_{n-1}\cap K)$, which is a Hall factorization by induction assumption.
			Since $M_n\lhd M_{n-1}$, by the argument above we have
			\[
			M_{n}=(M_{n}\cap (M_{n-1}\cap H))(M_{n}\cap (M_{n-1}\cap K))
			=(M_{n}\cap H)(M_{n}\cap K).
			\]
			Therefore, $M=(M\cap H)(M\cap K)$, and the proof is completed.
		\end{proof}

		\begin{lemma}\label{lem:quotient}
			Let $G$ be a finite group with a Hall factorization $G=HK$ and $N\lhd G$.
			Set $\ov G=G/N$ such that neither $H$ nor $K$ is contained in $N$, $\ov H=HN/N$ and $\ov K=KN/N$.
			Then $\ov G$ has a Hall factorization $\ov{G}=\ov{H}\,\ov{K}$.
		\end{lemma}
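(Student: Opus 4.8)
The plan is to verify the two requirements separately: first that $\ov G=\ov H\,\ov K$ is genuinely a factorization, and then that one of the two factors remains a Hall subgroup of $\ov G$. The factorization itself is immediate: writing an arbitrary element of $\ov G$ as $gN$ with $g=hk$ (where $h\in H$, $k\in K$) gives $gN=(hN)(kN)\in\ov H\,\ov K$, so $\ov G=\ov H\,\ov K$; and the hypothesis that neither $H$ nor $K$ lies in $N$ guarantees that both $\ov H$ and $\ov K$ are nontrivial, so this factorization is non-degenerate.

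The substantive point is that the Hall property passes to the quotient. Since $G=HK$ is a Hall factorization, by definition one of $H,K$ is a Hall subgroup; say $H$ is (the case of $K$ being Hall is symmetric). I would then track two divisibility relations. On the one hand, by the second isomorphism theorem $\ov H=HN/N\cong H/(H\cap N)$, so $|\ov H|=|H|/|H\cap N|$ divides $|H|$. On the other hand, by the third isomorphism theorem,
\[|\ov G:\ov H|=|(G/N):(HN/N)|=|G:HN|,\]
which divides $|G:H|$ because $H\le HN\le G$.

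With these two facts in hand, the conclusion is immediate: since $H$ is a Hall subgroup of $G$ we have $\gcd(|H|,|G:H|)=1$, and because $|\ov H|$ divides $|H|$ while $|\ov G:\ov H|$ divides $|G:H|$, it follows that $\gcd(|\ov H|,|\ov G:\ov H|)$ divides $\gcd(|H|,|G:H|)=1$. Hence $\ov H$ is a Hall subgroup of $\ov G$, and therefore $\ov G=\ov H\,\ov K$ is a Hall factorization. If instead $K$ is the Hall factor, the identical computation applied to $K$ in place of $H$ shows $\ov K$ is Hall in $\ov G$.

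I do not anticipate any serious obstacle here: the whole argument rests only on the elementary order arithmetic of quotients, and the single thing to keep straight is the direction of the two divisibilities, namely that $|\ov H|$ divides $|H|$ while $|\ov G:\ov H|$ divides $|G:H|$, so that the coprimality of $|H|$ and $|G:H|$ is inherited by $|\ov H|$ and $|\ov G:\ov H|$. In particular no use of $H\cap K=1$, nor of any deeper structure of $G$, is required; this is the natural quotient counterpart of the subnormal inheritance recorded in Lemma~\ref{subnormal}.
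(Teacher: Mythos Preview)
Your proof is correct and follows essentially the same elementary route as the paper's: both arguments reduce to the second isomorphism theorem plus a divisibility check. The only cosmetic difference is that the paper concludes by showing $\gcd(|\ov H|,|\ov K|)=1$ from $\gcd(|H|,|K|)=1$, whereas you work directly with the index, using that $|\ov G:\ov H|=|G:HN|$ divides $|G:H|$.
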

		\begin{proof}
			Since $N\lhd G$, we have $N\cap H\lhd H$ and $N\cap K\lhd K$.
			Thus $\ov{H}\cong H/(N \cap H)$ and $\ov{K} \cong K/(N\cap K)$.
			Since $G=HK$, we have $\ov{G} = G/N =\ov{H}\,\ov{K}$, and $\gcd(|\ov H|,|\ov K|)=1$ as
			$\gcd(|H|,|K|)=1$.
			So $\ov{G}=\ov{H}\,\ov{K}$ is a Hall factorization.
		\end{proof}
		Next, we consider solvable \text{groups $G$}.
		
\begin{lemma}\label{lem:solvable}
Let $G=HK$ be a solvable group, where $H$ is a Hall subgroup of $G$, and $K$ is cyclic.
Then $G=N.(K{:}\calO)$, and $H=N.\calO$, where $N$ is the core of $H$ in $G$, and $\calO\leqslant\Aut(K)$ is abelian.
\end{lemma}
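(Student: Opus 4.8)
The plan is to reduce to the core-free case and then read off the structure from the Fitting subgroup $\Fit(G)$. First I would pass to the quotient $\ov G=G/N$. Since $N=\core_G(H)\le H$ and $H\cap K=1$, we get $N\cap K=1$, so $\ov K=KN/N\cong K$ remains cyclic, while $\ov H=H/N$ is a solvable Hall subgroup; by Lemma~\ref{lem:quotient} the product $\ov G=\ov H\,\ov K$ is again a Hall factorization. Because $N$ is already the largest normal subgroup of $G$ contained in $H$, the core of $\ov H$ in $\ov G$ is trivial. Hence it is enough to treat the case $N=1$ and to prove there that $K\lhd G$, $G=K{:}H$, and $H\hookrightarrow\Aut(K)$; since $\Aut(K)$ is abelian for cyclic $K$, this makes $\calO:=H$ abelian, and the general statement follows by lifting back through $G\to\ov G$ with $\calO\cong\ov H=H/N$.

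So assume $N=\core_G(H)=1$ and set $\pi=\pi(H)$, so that $H$ is a Hall $\pi$-subgroup and $K$ a Hall $\pi'$-subgroup of $G$. Let $F=\Fit(G)$, which is nontrivial since $G$ is solvable. Being normal, $F$ is subnormal, so Lemma~\ref{subnormal} yields the Hall factorization $F=(F\cap H)(F\cap K)$. Here $F\cap H$ is the Hall $\pi$-subgroup of the nilpotent group $F$, namely $\prod_{p\in\pi}\mathrm{O}_p(G)$, which is normal in $G$; therefore $F\cap H\le\core_G(H)=1$. This forces $F=F\cap K\le K$, so $F$ is cyclic and equals $\mathrm{O}_{\pi'}(G)$. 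As $G$ is solvable we have $C_G(F)\le F$, and since $F$ is abelian this gives $C_G(F)=F$, whence $G/F$ embeds into the abelian group $\Aut(F)$ and is itself abelian.

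It remains to assemble these facts. Since $F\le K$, the subgroup $K/F$ of the abelian group $G/F$ is normal, so $K\lhd G$; being a normal $\pi'$-subgroup it satisfies $K\le\mathrm{O}_{\pi'}(G)=F$, and therefore $K=F=\Fit(G)$ with $C_G(K)=K$. Consequently $G=K{:}H$ with $H\cong G/K$ acting faithfully on $K$, that is, $H\hookrightarrow\Aut(K)$ is abelian, as required. I expect the crux to be the middle step: observing that the $\pi$-part of $\Fit(G)$ is normal in $G$ and hence, by core-freeness, trivial --- this is exactly what collapses $\Fit(G)$ into the cyclic group $K$. Once $\Fit(G)$ is cyclic and self-centralizing (the latter being the standard property of Fitting subgroups of solvable groups), the abelianness of $G/\Fit(G)$ and the normality of $K$ fall out immediately.
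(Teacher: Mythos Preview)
Your proof is correct and follows the same route as the paper's: reduce to the core-free case via the quotient by $N$, then show that the Fitting subgroup $F$ meets $H$ trivially (both arguments hinge on the fact that $\prod_{p\in\pi}\OO_p(G)$ is normal in $G$ and hence lies in $\core_G(H)=1$), conclude $F\le K$, and finally identify $F=K$. The paper is terser at the last step, simply asserting ``It follows that $F=K$'', whereas you spell out the mechanism --- $C_G(F)=F$, so $G/F\hookrightarrow\Aut(F)$ is abelian, hence $K/F\lhd G/F$, hence $K\lhd G$ and $K\le\OO_{\pi'}(G)=F$ --- which is exactly what is needed to justify that assertion.
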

		
		\begin{proof}
			In order to prove the lemma, we may assume that $H$ is not normal in $G$.
			Let $N$ be the core of $H$ in $G$, and let $\ov G=G/N$.
			Then $\ov G=\ov H\,\ov K$ is a Hall factorization by Lemma \ref{lem:quotient}, and $\ov H$ is core-free in $\ov G$.
			Thus, to complete the proof, we may assume that $N=1$.
			
			Let $F$ be the Fitting subgroup of $G$, and let $\calO=G/F$.
			Since $H$ is core free, we have $F\cap H=1$.  
If not, there is some prime $p\mid |H|$ such that $O_p(G)\le F\cap H$, a contradiction.
			Let $\pi=\pi(H)$, the set of prime divisors of the order $|H|$.
			Then $F$ is a $\pi'$-subgroup of $G$, and thus $F\leqslant K$. 	
It follows that $F=K$ and $\calO\leqslant\Aut(K)$ is abelian. 
Since $K$ is a Hall normal subgroup of $G$, we have $G=K{:}O$.
		\end{proof}

We recall that a permutation group is called a {\it c-group} if it has a regular cyclic subgroup. Almost simple c-groups are determined in \cite{li2012cyclic}.
		
\begin{lemma}\label{2.6}
Let $T$ be a nonabelian almost simple group which has a non-trivial Hall factorization $T=HK$ such that $K$ is cyclic.
Then $(T,H,K)$ is a triple listed in Hypothesis~$\ref{hypo-1}$.
\end{lemma}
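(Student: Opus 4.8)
The plan is to reduce the statement to the classification of almost simple c-groups given in \cite{li2012cyclic}. The hypothesis is that $T$ is almost simple with a non-trivial Hall factorization $T=HK$, where $K$ is cyclic and $\gcd(|H|,|K|)=1$. First I would record what the Hall condition buys us: since $H$ is a Hall subgroup and $T=HK$ with $H\cap K=1$, we have $|K|=|T|/|H|=|T:H|$, so $K$ is a regular cyclic subgroup of $T$ acting on the coset space $[T:H]$. Thus $T$, viewed as a transitive permutation group on $\Ome=[T:H]$ with point stabilizer $H$, is an almost simple c-group in the sense defined just before the lemma. The classification in \cite{li2012cyclic} then provides a finite list of possibilities for the triple $(T,\Ome,K)$, equivalently for $(T,H,K)$ with $H=T_\a$ the stabilizer of a point.

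The second step is to pass from that classification to the specific list in Hypothesis~\ref{hypo-1} by imposing the two extra constraints that a general c-group need not satisfy: (i) the Hall/coprimality condition $\gcd(|H|,|K|)=1$, and (ii) the non-triviality of the factorization (so that $K\neq 1$ and $H\neq T$). For each family in the c-group classification one computes $|H|$ and $|K|=|T:H|$ and tests coprimality. I expect the alternating and symmetric groups to survive only in their natural degree-$p$ action, giving $H=\A_{p-1}$ or $\S_{p-1}$ and $K=\ZZ_p$ with $p$ prime, since $\gcd((p-1)!,p)=1$ exactly when $p$ is prime. For the linear groups $\PSL(d,q)$ acting on the projective space, the regular cyclic subgroup is a Singer cycle of order $(q^d-1)/(q-1)$ with $H=\AGL(d-1,q)$ a parabolic; coprimality of $|K|=(q^d-1)/(q-1)$ with $|H|$ forces $d$ to be prime and $\gcd(d,q-1)=1$, and one allows the field automorphism $\phi$ as indicated. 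The remaining sporadic and small cases—$\PSL(2,11)$, $\M_{11}$, $\M_{23}$—are read off directly from the classification and checked to satisfy coprimality.

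The main obstacle will be step two, specifically the coprimality arithmetic for the infinite linear family: one must verify that the Hall condition $\gcd\bigl(|\AGL(d-1,q)|,\tfrac{q^d-1}{q-1}\bigr)=1$ is equivalent to the stated conditions $d$ prime and $\gcd(d,q-1)=1$, and that no other transitive almost simple c-group survives the coprimality filter. The key number-theoretic point is that a prime $r$ dividing $(q^d-1)/(q-1)$ is either $r=d$ (when $d\mid q-1$ fails in a controlled way, via Zsygmondy-type analysis) or a primitive prime divisor of $q^d-1$, and such primitive divisors do not divide $|\GL(d-1,q)|=q^{\binom{d-1}{2}}\prod_{i=1}^{d-1}(q^i-1)$; handling the exceptional prime $r=d$ is exactly what pins down the conditions that $d$ be prime and $\gcd(d,q-1)=1$. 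I would organize the verification family-by-family against the tables in \cite{li2012cyclic}, discarding each candidate whose stabilizer order shares a common factor with its degree, and collecting the survivors, which then coincide with Hypothesis~\ref{hypo-1}.
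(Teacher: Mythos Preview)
Your proposal is correct and follows essentially the same approach as the paper: reduce to the classification of almost simple c-groups from \cite{li2012cyclic}, then filter family-by-family using the coprimality condition $\gcd(|H|,|K|)=1$. The only minor difference is in the arithmetic for the linear family: the paper avoids Zsygmondy entirely and instead uses the elementary observations that $\gcd(q^d-1,q^j-1)=q^{\gcd(d,j)}-1$ together with the congruence $\tfrac{q^d-1}{q-1}\equiv d\pmod{q-1}$, which immediately gives both directions of the equivalence with ``$d$ prime and $\gcd(d,q-1)=1$''.
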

		
		\begin{proof}
			Let $\Omega=[T:H]$.
			Then $T$ is a transitive permutation group on $\Omega$, and so is $K$.
			Since $K$ is cyclic, $K$ is regular on $\Omega$.
			Thus $T$ is an almost simple $c$-group of order $n$.
			By \cite[Theorem~1.2(2)]{li2012cyclic}, $(T,n)$ is known, and is one of the following pairs:
			\begin{quote}
				$(\M_{11},11)$, $(\M_{23},23)$, $(\PSL(2,11),11)$, $(\A_n,n)$ with $n$ odd, $(\S_n,n)$, $(\PGL(d,q){:}\l\phi_0\r,\frac{q^d-1}{q-1})$,
				where $\l\phi_0\r$ is a subgroup of a Galois group of the field $\GF(q)$.
			\end{quote}
			We next find out those $(T,n)$ such that $\gcd(|H|,n)=1$.
			First, if $T = \M_{11}$, $\M_{23}$, and $\PSL(2, 11)$, then $\gcd(|H|, n)=1$.
			
		For the pair $(\A_n, n)$, we have $H=\A_{n-1}$ and $n$ is a prime as $\gcd\left(\frac{(n{-}1)!}{2},n\right) = 1$.
			
			Assume that $(T,n)=(\PGL(d,q), \frac{q^{d}-1}{q-1})$.
			Then $H= q^{d-1}{:}\GL(d-1,q)$, and so
			\[\gcd\Bigl(q(q^{d-1}-1)(q^{d-2}-1)\dots(q-1),{q^d-1\over q-1}\Bigr)=1.\]
			It yields that $d$ is a prime.
			Suppose that $d$  is a divisor of $q-1$.
			Then
			\[{q^d-1\over q-1}=q^{d-1}+\dots +q+1=(q^{d-1}-1)+\dots +(q-1)+d\]
			is divisible by $d$.
Hence both $q(q^{d-1}-1)(q^{d-2}-1)\dots(q-1)$ and ${q^d-1\over q-1}$ are divisible by $d$, and so they are not coprime, which is a contradiction.
			
			Conversely, suppose that $d$ is a prime and $\gcd(d,q-1)=1$.
			As $d$ is a prime, we have that $\gcd(q^d-1,q^j-1)=q-1$ for any $1\le j\le d-1$.
			Hence
			\[
			\gcd\Bigl(\frac{q^d-1}{q-1},q^j-1\Bigr)
			=\gcd\Bigl(\frac{q^d-1}{q-1},q-1\Bigr).
			\]
			Noting that $\frac{q^d-1}{q-1}\equiv d\pmod{q-1}$ (see above), it follows that for $1\le j\le d-1$,
			\begin{equation}\label{eq:gcd=1}
			  \gcd\Bigl(\frac{q^d-1}{q-1},q^j-1\Bigr)
			=\gcd(d,q-1)=1.
			\end{equation}
			Therefore, we conclude that $\gcd\left(|T|,n\right)=\gcd\left(\prod_{j<d}(q^j-1),\frac{q^d-1}{q-1}\right)=1$.
		\end{proof}

Now it is ready to prove the first main theorem.
		
\vskip0.1in
{\bf Proof of Theorem~\ref{skewG}:}\ \
Let $G=HK$ be such that $H$ is a Hall subgroup of $G$, and $K$ is cyclic.
To prove the theorem, we assume that $G$ is a minimal counterexample.

Suppose that $H$ is not core-free in $G$.
Let $M$ be the core of $H$ in $G$, so that $M\not=1$.
Then $G/M=(H/M)(KM/M)$ is a Hall factorization by Lemma~\ref{lem:quotient}, and $G/M$ satisfies Theorem~\ref{skewG}.
It yields that $G=HK$ satisfies Theorem~\ref{skewG}, which is not possible.
So $H$ is core-free in $G$.

Let $R$ be the solvable radical of $G$.
Suppose that $R\not=1$.
By Lemma~\ref{subnormal} and Lemma~\ref{lem:solvable}, we obtain
		\[R=K_0{:}\calO_0,\]
where $K_0\leqslant K$ and $\calO_0\leqslant\Aut(K_0)$ is abelian.
By Lemma~\ref{lem:quotient}, $G/R$ satisfies Theorem \ref{skewG}, and we have that $G/R=(T_1\times\dots\times T_r).\calO_1$.
Let $W=R.(T_1\times\cdots\times T_r)$. Then $W\lhd G$. Since $R=K_0{:}\mathcal{O}_0$ and $\gcd(|K_0|,|\mathcal{O}_0|)=1$, we conclude that $K_0\char R$, which yields $K_0\lhd W$. Noting that $K_0$ is cyclic and $\mathcal{O}_0\leq \Aut(K_0)$, we have $W/\C_{W}(K_0)=\mathcal{O}_0$, and $\C_W(K_0)=K_0.(T_1\times\cdots\times T_k)$.
Since $K_0\le K$, we conclude that  $\gcd(|K_0|,|T_i|)=1$ for each $i$. It follows that $\C_W(K_0)=K_0\times T_1\times\cdots\times T_k$.
Therefore, $W=(K_0\times T_1\times\cdots\times T_k).\mathcal{O}_0$.
Thus, we have
\[G=R.(G/R)=(K_0{:}\calO_0).\big((T_1\times\dots\times T_r).\calO_1\big)=(T_1\times\dots\times T_r\times K_0).\calO,\]
where $\calO=\calO_0.\calO_1$, satisfying Theorem~\ref{skewG}.
This contradiction shows that $G$ does not have non-trivial solvable normal subgroups.

Let $N$ be the socle of $G$, the product of all minimal normal subgroups of $G$.
Then
		\[N=T_1\times T_2\times\dots\times T_r,\]
where $T_i$ is nonabelian simple and $r$ is a positive integer.
By Lemma~\ref{subnormal}, $N$ and each $T_i$ have a Hall factorization.
Let $N=H^*K^*$, and $T_i=H_iK_i$, where $1\leqslant i\leqslant r$.
By Lemma~\ref{2.6}, the tuple $(T_i,H_i,K_i)$ lies in Table~\ref{tab:1} in Hypothesis~\ref{hypo-1}, with $e(T_i)=|K_i|$.
Moreover, the cyclic factor $K^*$ of $N$ is equal to
		\[K^*=K_1\times K_2\times\dots\times K_r.\]
		It follows that $|K_1|,|K_2|,\dots,|K_r|$ are pairwise coprime, so $e(T_1),\dots, e(T_r)$ are pairwise coprime.
		In particular, $T_1,T_2,\dots,T_r$ are pairwise nonisomoprhic.
        We have $G/N=\calO\leqslant\Out(N)=\Out(T_1)\times\dots\times\Out(T_r)$, since $\C_G(N)=1$. It yields $\calO\le H$ and $K\le N$.
Thus, we have $G=(T_1\times\dots\times T_r).\calO$, and $G$ satisfies Theorem \ref{skewG}. This contradiction shows that $G$ always satisfies Theorem~\ref{skewG}, completing the proof.
\qed

The following proposition shows that the number $r$ of simple factors in $G$ can be arbitrarily large.
		
\begin{proposition}\label{any r}
Let $d$, $p$ be two primes with $d>p$.
Let $d_i$ with $1\leqslant i\leqslant r$ be distinct primes such that $d<d_1<\dots<d_r<d^2$, and set $q_i=p^{d_i}$.
Then
	\[\gcd(d_i,\ q_i-1)=1,\ \mbox{and}\ \gcd\left({q_i^{d_i}-1\over q_i-1},\ q_j^k-1\right)=1,\]
for any $i,j\in\{1,2,\dots,r\}$ and positive integer \textcolor{red}{}{$k\le d_j$}.
Moreover, $r\to \infty$ as $d\to \infty$.
\end{proposition}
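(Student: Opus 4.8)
The plan is to reduce both coprimality claims to the elementary identity $\gcd(p^a-1,p^b-1)=p^{\gcd(a,b)}-1$ together with the congruence $\frac{q^m-1}{q-1}\equiv m\pmod{q-1}$ recorded in the proof of Lemma~\ref{2.6}, and to obtain the growth of $r$ from a count of primes in the interval $(d,d^2)$.

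First I would settle $\gcd(d_i,q_i-1)=1$. Since $q_i=p^{d_i}$ and $d_i$ is prime, Fermat's little theorem gives $p^{d_i}\equiv p\pmod{d_i}$, hence $q_i-1\equiv p-1\pmod{d_i}$; as $1\le p-1<p<d<d_i$, the prime $d_i$ cannot divide $p-1$, so $\gcd(d_i,q_i-1)=1$. The hypothesis $d>p$ enters only to force $d_i>p-1$.

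Next I would treat $g:=\gcd\!\left(\frac{q_i^{d_i}-1}{q_i-1},q_j^k-1\right)$. Writing $q_i^{d_i}=p^{d_i^2}$ and $q_j^k=p^{d_jk}$, and using that $\frac{q_i^{d_i}-1}{q_i-1}$ divides $q_i^{d_i}-1$, the identity above yields
\[g\mid\gcd\!\left(p^{d_i^2}-1,p^{d_jk}-1\right)=p^{\gcd(d_i^2,d_jk)}-1.\]
The crux is that $\gcd(d_i^2,d_jk)$ divides $d_i$: as $d_i$ is a prime distinct from $d_j$, the exponent of $d_i$ in $d_jk$ equals that in $k$, and $k\le d_j<d^2<d_i^2$ (using $d_i>d$) forces $d_i^2\nmid k$, so that exponent is at most $1$. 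Therefore $g\mid p^{d_i}-1=q_i-1$, and combining with $\frac{q_i^{d_i}-1}{q_i-1}\equiv d_i\pmod{q_i-1}$ gives
\[g\mid\gcd\!\left(\tfrac{q_i^{d_i}-1}{q_i-1},q_i-1\right)=\gcd(d_i,q_i-1)=1,\]
so $g=1$. This covers all $i\ne j$ (the range needed for the application to $\gcd(|T_i|,e(T_j))=1$), and for $i=j$ all $k\le d_i-1$, where it recovers \eqref{eq:gcd=1}.

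Finally, choosing $d_1<\dots<d_r$ to be all primes of $(d,d^2)$ makes $r=\pi(d^2)-\pi(d)$; applying Bertrand's postulate to the disjoint intervals $(2^td,2^{t+1}d)$ for $0\le t\le\lfloor\log_2 d\rfloor-1$, each contained in $(d,d^2)$, produces at least $\lfloor\log_2 d\rfloor$ distinct primes there, so $r\ge\lfloor\log_2 d\rfloor\to\infty$ as $d\to\infty$ (and in fact $r\sim d^2/(2\ln d)$ by the prime number theorem). I expect the only genuine obstacle to be the valuation bookkeeping in the middle step: one must pin down the exact power of the prime $d_i$ dividing $d_jk$, and it is precisely the interplay of the bounds $d_i>d$ and $d_j<d^2$ that keeps this power $\le 1$ and collapses the cross-term gcd to a divisor of $q_i-1$.
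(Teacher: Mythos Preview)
Your argument is correct and follows essentially the same path as the paper's proof, reducing both coprimality claims to the identity $\gcd(p^a-1,p^b-1)=p^{\gcd(a,b)}-1$ together with the congruence $\frac{q^m-1}{q-1}\equiv m\pmod{q-1}$ in the same way. The only minor deviations are that you invoke Bertrand's postulate rather than the prime number theorem for the growth of $r$, and that you (correctly, and in agreement with the paper's appeal to \eqref{eq:gcd=1}) handle the case $i=j$ only for $k\le d_i-1$.
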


\begin{proof}
Since $d_i$ and $p$ are distinct primes, $d_i$ divides $p^{d_i-1}-1$ by Fermat Little Theorem.
If $d_i$ divides $p^{d_i}-1$, then $d_i$ divides $\gcd(p^{d_i-1}-1,p^{d_i}-1)=p-1$, which contradicts the assumption $d_i>p$.
Hence $d_i$ does not divide $p^{d_i}-1$, and so $\gcd(d_i,p^{d_i}-1)=1$, the first equality is proved.

Next we prove the second equality.
Since $d_i$ is coprime to $q_i-1$, and $q_i-1$ divides $q_i^e-1$ for each positive integer $e$, it yields that \[\frac{q_i^{d_i}-1}{q_i-1}=q_i^{d_i-1}+\dots+q_i+1=(q_i^{d_i-1}-1)+\dots+(q_i-1)+d_i\]
is coprime to $q_i-1$, and hence $\gcd\bigl(\frac{q_i^{d_i}-1}{q_i-1},\,q_i-1\bigr)=1$.

For $i=j$, we have $\gcd\left({q_i^{d_i}-1\over q_i-1},\ q_i^k-1\right)=1$ by \eqref{eq:gcd=1}.
Thus we assume that $i\neq j$.
By Euclidean algorithm, we deduce
			\[\gcd(q_i^{d_i}-1,q_j^k-1)=\gcd(p^{d_i^2}-1,p^{kd_j}-1)=p^{\gcd(d_i^2,d_j k)}-1=p^{\gcd(d_i^2,k)}-1.\]
We claim that
			\[\mbox{$\gcd(d_i^2,k)=\gcd(d_i,k)$, for any $1\leqslant k\le d_j$.}\]
If $i>j$, then $k\leqslant d_j<d_i$, and $\gcd(d_i,k)=\gcd(d_i^2,k)=1$ as $d_i$ is a prime.
			In the case where $i<j$, we have $d_i<d_j<d^2<d_i^2$.
			It yields that $\gcd(d_i,k)=\gcd(d_i^2,k)=d_i$ if $d_i\mid k$, and
			$\gcd(d_i,k)=\gcd(d_i^2,k)=1$ if $d_i$ does not divide $k$.
			The Claim is justified.
			Thus we conclude that
			\[\gcd(q_i^{d_i}-1,q_j^k-1)=p^{\gcd(d_i^2,k)}-1=p^{\gcd(d_i,k)}-1\]
			divides $p^{d_i}-1=q_i-1$, and so
			\[
			\gcd\Bigl(\frac{q_i^{d_i}-1}{q_i-1},\,q_j^k-1\Bigr)=1
			\quad\text{for all }i\neq j, \,1\le k\le d_j.
			\]
			
			Finally, letting $\pi(n)$ be the number of primes which are at most $n$,
			by the Prime Number Theorem, we have
			\[
			r = \pi(d^2)-\pi(d) \sim \frac{d^2}{\ln(d^2)} - \frac{d}{\ln d}
			= \frac{d^2-2d}{2\ln d}.
			\]
			The number $r$ of prime numbers lying between $d$ and $d^2$ approaches $\infty$ if $d$ goes to $\infty$.
			This completes the proof of the proposition.
		\end{proof}

Now we can present an explicit family of examples with arbitrarily large $r$.

\begin{example}\label{ex:r-nobound}
{\rm For primes $p<d<d_1<\dots<d_r<d^2$, let $T_i=\PSL(d_i,p^{d_i})$ with $1\leqslant i\leqslant r$.
Then $\gcd(|T_i|,e(T_j))=1$ for any $i\not=j$, and so the group
		$$\PSL(d_1,p^{d_1})\times\PSL(d_2,p^{d_2})\times\dots\times\PSL(d_r,p^{d_r})$$
has a  factorization with a cyclic factor of order $\prod_{1\leqslant i\leqslant r}{p^{d_i^2}-1\over p^{d_i}-1}$ \text{as its Hall subgroup}.}
\end{example}

{\bf Proof of Corollary~\ref{cor:r-nobound}:}\
By Example~\ref{ex:r-nobound}, there is no upper bound for the number $r$ of the direct factors $T_i$'s.
\qed

{\bf Proof of Theorem~\ref{thm:skew-G}:}\
Let $H$ be a finite group which has a skew-morphism $\rho$.
Then there exists a group $G=H\l\rho\r$ such that $\l\rho\r$ is core-free in $G$.
Hence the triple $(G,H,\l\rho\r)$ is a triple $(G,H,K)$ described in Theorem~\ref{skewG}, so that
\[H=N.(H_1\times \dots\times H_r).\calO,\]
where each $H_i<T_i$ with $(H_i,T_i)$ being a pair $(H,T)$ given in Hypothesis~\ref{hypo-1}.
Let
\[\ell_i=e(T_i),\ \mbox{where $1\leqslant i\leqslant r$}.\]

Without loss of generality, we may assume that, for some $s$ with $1\leqslant s\leqslant r$,
\begin{itemize}
\item $H_i\in\{\A_{p-1},\S_{p-1},\A_5,\M_{10},\M_{22}\}$ for $i\leqslant s$, and
\item $H_i=\AGL(d_i,q_i)$ or $\AGL(d_i,q_i).\l\phi_i\r$ for $i>s$.
\end{itemize}

Now we determine $L=\prod_{i\leqslant s}H_i$.
We claim that $\A_p$ can appear at most once among the $T_i$'s with $i\leqslant s$.
Suppose that $T_1=\A_{p_1}$ and $T_2=\A_{p_2}$ with $p_1\leqslant p_2$.
Then $\ell_1=p_1$ and $\ell_2=p_2$.
Clearly, $p_1=\ell_1\not=\ell_2=p_2$, so $p_1<p_2$.
Then $p_1\mid |\A_{p_2-1}|=\frac{(p_2-1)!}{2}$, which is a contradiction since $\gcd(|\A_{p_2}|,p_1)=1$.
Similar arguments show that none of $\PSL(2,11)$, $\M_{11}$ or $\M_{23}$ can appear twice.

Suppose $s\geqslant2$.
Then $H_1\times H_2$ has a Hall skew-morphism such that $H_i<T_i$ and
\[\{T_1,T_2\}\subset\{\A_p,\S_p,\PSL(2,11),\M_{11},\M_{23}\},\ \mbox{where $p$ is a prime}.\]

Assume first that $T_1=\A_p$.
Then $T_2\in \{\PSL(2,11),\M_{11},\M_{23}\}$, and so $T_1\times T_2=(H_1\times H_2)\l\rho\r$, where
\[\mbox{$(H_1\times H_2)\l\rho\r=(\A_{p-1}\times\A_5)\ZZ_{11p}$, $(\A_{p-1}\times\M_{10})\ZZ_{11p}$, or $(\A_{p-1}\times\M_{22})\ZZ_{23p}$.}\]
It yields that $T_1\times T_2=\A_7\times\PSL(2,11)$, $\A_7\times\M_{11}$, or $\A_{13}\times\M_{23}$, $\A_{17}\times\M_{23}$ or $\A_{19}\times\M_{23}$, which are listed in the theorem.

If $T_1=\PSL(2,11)$, then $T_2=\M_{11}$ or $\M_{23}$, which is not possible.

If $T_1=\M_{11}$, then $T_2=\M_{23}$, which is not possible.
This completes the proof.
\qed

		

\noindent {\bf Proof of Corollary~\ref{cor:solvable}}.

Inspecting the candidates $(T,H,K)$ given in Hypothesis~\ref{hypo-1} with $H$ being solvable, we conclude that $(T,H,K)$ is in the following table.
\[
\begin{array}{ccccc}
\hline
T & e(T) & H & K & \text{remark} \\ \hline

\PSL(2,q) & q+1 & \AGL(1,q) & \ZZ_{q+1} & q=2^f \\

\PSL(3,2) & 7 & \S_4 & \ZZ_7 & \\

\PSL(3,3) & 13 & \AGL(2,3) & \ZZ_{13} & \\
\hline
\end{array}
\]
Suppose that $T_1=\PSL(2,2^e)=H_1K_1$ and $T_2=\PSL(2,2^f)=H_2K_2$, where $e<f$.
Then $K_1\times K_2$ is cyclic, so $\gcd(2^e+1,2^f+1)=1$.
As $\gcd(|H_1||H_2|,|K_1||K_2|)=1$, we have
			\[\gcd(2^e-1,2^f+1)=1,\ \gcd(2^e+1,2^f-1)=1.\]
Therefore, we obtain
\[\begin{aligned}
				2^{2\cdot\gcd(e,f)} - 1
				&= \gcd\bigl(2^{2e} - 1,\, 2^{2f} - 1\bigr) \\
				&\leq \gcd(2^{e} + 1,\, 2^{f} + 1)
				\cdot \gcd(2^{e} + 1,\, 2^{f} - 1) \\
				&\quad\ \cdot \gcd(2^{e} - 1,\, 2^{f} + 1)
				\cdot \gcd(2^{e} - 1,\, 2^{f} - 1) \\
				&= 2^{\gcd(e,f)} - 1,
			\end{aligned}
			\]
which is a contradiction.
That is to say, among the direct factor of $N=T_1\times\dots\times T_r$, at most one $T_i$ is of the form $\PSL(2,2^f)$.
Thus $N=T_1\times\dots\times T_r$ is such that $r\leqslant3$.

If $r=1$, then obviously $N=T$ is as in the above table.
		
		Assume that $r\geqslant2$.
		Then $N>\PSL(3,2)$ or $\PSL(3,3)$.
		Assume further that $N\not=\PSL(3,2)\times\PSL(3,3)$.
Then $N=\PSL(3,2)\times\PSL(2,2^f)$, $\PSL(3,3)\times\PSL(2,2^f)$, or $\PSL(3,2)\times\PSL(3,3)\times\PSL(2,2^f)$.
		
		Suppose that $T_1=\PSL(3,2)=H_1K_1$ and $T_2=\PSL(2,2^f)=H_2K_2$.
		Then $\gcd(|T_1|,2^f+1)=1$, yielding that $f$ is even.
		If $f$ is divisible by 6, then $2^f-1$ is divisible by $2^6-1$ so by 7;
		however, $e(T_1)=2^2+2+1=7$ should be coprime to $|T_2|$, which is not possible.
		So conclude that $f\equiv 2$ or 4 $(\mod 6)$.
Similarly, if $N=\PSL(3,2)\times\PSL(3,3)\times\PSL(2,2^f)$, then $f\equiv 2$ or 4 $(\mod 6)$.
		
		Suppose that $T_1=\PSL(3,3)$ and $T_2=\PSL(2,2^f)=H_2K_2$.
		Then $e(T_1)=3^2+3+1=13$ divides $2^6+1$, yielding that $f$ is not divisible by 6.
		So we conclude that $q\equiv 2$ or $4$ $(\mod 6)$.
		\qed

\def\PGaL{{\rm P\Gamma L}}
\def\PSigL{{\rm P\Sigma L}}

\section{Hall Cayley maps}\label{sec:3}
		
In this section, we prove Theorem~\ref{thm:maps} by a series of lemmas.
		
Let $\calM=(V,E,F)$ be a Cayley map of $H$ which is $G$-vertex-rotary, where $G\leqslant\Aut\calM$.
Then there exists a rotary pair $(\rho,z)$ for $G$, so that
\[G=\l\rho,z\r.\]
Let $\a$ be the vertex corresponding to the identity of $H$.
Then $G_\a=\l\rho\r$ is regular on the edge set $E(\a)$, and
\[G=HG_\a,\ \mbox{and $H\cap G_\a=1$}.\]
Assume that $\calM$ is a Hall Cayley map of $H$.
Then $H$ is a Hall subgroup of $G$.

We first show that $\calM$ is a core-free Hall Cayley map if and only if $H$ is core-free in $G$. The sufficiency follows since $\core_{\Aut\calM}(H)\leq \core_G(H)=1$. 
Note that $\Aut\calM=G{:}2$ or $G$, so we assume that $\Aut\calM=G{:}\l x\r$, where $|x|=2$. Denote $N=\core_G(H)$, then $1=\core_{\Aut\calM}(H)=N\cap N^x$. Since $N^x\lhd G$,  $N^xH$ is a Hall subgroup of $G$, which implies that $N^xH=H$. Thus, $N^x\leq H$. Since $N=\core_G(H)$ and $N^x\lhd G$, we conclude that $N^x=N$. Therefore, $N=\core_{(G{:}\l x\r)}(H)=1$.

So, in order to study core-free Hall Cayley map, we
assume that both $H$ and $K$ are core-free in $G$.
		Then by Theorem~\ref{skewG}, we obtain that $G=(T_1\times \dots\times T_r).\calO$,
		where $\gcd(|T_i|,e(T_j))=1$ for any $i\not=j$, and
		$T_i=H_iK_i$ is a simple group satisfying Hypothesis~$\ref{hypo-1}$ such that
		\begin{itemize}
			\item[\rm(i)] $H=(H_1\times\dots\times H_r).\calO$, with $\calO\leqslant\Out(T_1)\times\dots\times\Out(T_r)$,
			\item[\rm(ii)] $K=K_1\times\dots\times K_r$.
		\end{itemize}
		
The following lemma determines $\calO$.
		
\begin{lemma}\label{lem:O=<2}
With notation given above, $\calO=1$ or $\ZZ_2$, we have that $G=T_1\times \dots\times T_r$ or $(T_1\times \dots\times T_r).2$, and $\rho\in T_1\times \dots\times T_r$.
\end{lemma}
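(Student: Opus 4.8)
The plan is to exploit the two facts already in hand: that $G=\langle\rho,z\rangle$ with $z$ an involution, and that the factorization structure supplied by Theorem~\ref{skewG} forces the cyclic factor $K=\langle\rho\rangle$ to lie inside the socle $N=T_1\times\dots\times T_r$. Once the membership $\rho\in N$ is established, the quotient $\calO=G/N$ collapses almost immediately, and all three assertions of the lemma drop out.

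First I would record that, by the decomposition displayed just before the lemma, $K=\langle\rho\rangle=K_1\times\dots\times K_r$ with each $K_i\leqslant T_i$. Hence $K\leqslant T_1\times\dots\times T_r=N$, and in particular $\rho\in N=T_1\times\dots\times T_r$. This already yields the final assertion of the lemma, and it is the only place where the concrete product structure coming from Theorem~\ref{skewG} is genuinely used.

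Next, since $G=\langle\rho,z\rangle$ and $\rho\in N$, passing to the quotient $G/N=\calO$ gives $\calO=\langle N\rho,Nz\rangle=\langle Nz\rangle$, because $N\rho=N$. As $z$ is an involution, its image $Nz$ has order dividing $2$, so $\calO=\langle Nz\rangle$ is either trivial or cyclic of order $2$. Consequently $G=N.\calO$ equals $T_1\times\dots\times T_r$ when $\calO=1$, and $(T_1\times\dots\times T_r).2$ when $\calO=\ZZ_2$, exactly as claimed.

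I expect no serious obstacle here: the argument is essentially formal once $\rho\in N$ is available. The only point deserving a little care is to confirm that the cyclic factor $K$ produced by Theorem~\ref{skewG} really sits inside the socle rather than meeting the outer part $\calO\leqslant\Out(T_1)\times\dots\times\Out(T_r)$. This is guaranteed by the explicit form $K=K_1\times\dots\times K_r$ with $K_i\leqslant T_i$, so that $\langle\rho\rangle\leqslant N$ holds without any further computation, and the rest is just reading off $G/N$ from the single generator $Nz$.
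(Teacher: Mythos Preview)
Your proof is correct. Both arguments rest on the same starting observation, namely that the decomposition $K=K_1\times\dots\times K_r$ with $K_i\leqslant T_i$ forces $\langle\rho\rangle\leqslant N=T_1\times\dots\times T_r$; from there, however, you and the paper diverge. The paper argues geometrically: since every vertex stabilizer $G_\beta$ lies in the normal subgroup $N$, the group $N$ contains $\langle G_\beta\mid\beta\in V\rangle$ and is therefore edge-transitive, whence arc-regularity of $G$ forces $|G:N|\leqslant2$. You instead argue purely algebraically from the rotary-pair presentation $G=\langle\rho,z\rangle$: once $\rho\in N$, the quotient $G/N$ is generated by the image of the single involution $z$, so $|G/N|\leqslant2$. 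Your route is shorter and avoids any appeal to the map structure beyond the existence of the rotary pair; the paper's route, on the other hand, illustrates the general principle that a normal subgroup containing all point stabilizers has index bounded by the arc-to-edge ratio, which is a reusable permutation-group fact. Either way the conclusion and the location of $\rho$ come out the same.
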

		
		\begin{proof}
			Let $(\a,e,f)$ be a flag of the map $\calM$.
			Since $G$ is transitive on the vertex set $V$, we may assume that $K=G_\a$.
			Let $M=\soc(G)=T_1\times \dots\times T_r$.
			Then we have $G_\a<M$.
Thus $M\geqslant \l G_\b\mid \b\in V\r$, and hence $M$ is transitive on the edge set $E$.
Since $G$ is arc-regular on $\calM$, it follows that either $M$ is arc-transitive on $\calM$ and $M=G$, or $M$ is edge-regular on $\calM$ and $G=M.2$.
We therefore conclude that $\calO=G/M$ equals 1 or $\ZZ_2$.
\end{proof}
		
Next,  we shall completely determine almost simple groups $G$.
We first construct rotary pairs $(\rho,z)$ for each possible almost simple group $G$ in Hypothesis~\ref{hypo-1}, so that $G$ has a rotary map $\RotaMap(T, \rho,z)$ and a  bi-rotary map $\BiRoMap(T,\rho,z)$.

\begin{example}\label{ex:S(p)}
\rm Let $G=\S_p=HK$ with $H=\S_{p-1}$ and $K=\l \rho\r=\ZZ_p$, acting on $\Omega=\{1,2,\dots,p\}$.
Write $\rho=(12\dots p)\in G$.
Then the involution $z=(12)\in G$ is such that $(\rho,z)$ is a rotary pair for $G=\A_p{:}\l z\r$.
\qed
\end{example}
        		
\begin{example}\label{ex:Alt(p)}
\rm{
Let $T=\A_p=HK$ with $H=\A_{p-1}$ and $K=\l\rho\r=\ZZ_p$, acting on $\Omega=\{1,2,\dots,p\}$.
Write $\rho=(12\dots p)\in T$.
Let $z=(12)(34)\in T$.
Then
				\[zz^\rho=(13542)\]
is a 5-cycle, and by \cite[Theorem\,3.3E]{Dixon-book}, either $\l \rho,z\r=T$, or $p=7$.
For the case where $p=7$, it is easy to see that $\l \rho,z\r=T$ since $\l \rho,z\r$ contains elements of order 7 and order 5.
Therefore, $(\rho,z)$ is a rotary pair for $T$, and thus $H=\A_{p-1}$ has two Hall Cayley maps of $H$.
				
Moreover, $\rho z=(245\dots p)$, which is of order $p-2$ (and fixes the points 1 and 3), and $\l z,z^\rho\r=\l(12)(34),(23)(45)\r=\l(13542)\r{:}\l(12)(34)\r=\D_{10}$.
Thus we have
\begin{itemize}
\item $\RotaMap(T,\rho,z)$ has face stabilizer $\l\rho z\r=\ZZ_{p-2}$, and
\item $\BiRoMap(T,\rho,z)$ has face stabilizer $\l z,z^\rho\r=\D_{10}$. \qed
\end{itemize}
}
\end{example}

\begin{example}\label{ex:L(2,11)}
{\rm
Let $T=\PSL(2,11)=HK$, where $H=\A_5$ and $K=\ZZ_{11}$.
Pick an element $\rho\in K$, of order 11.
Then, for any involution $z\in T$, the pair $(\rho,z)$ is a rotary pair.
}
\qed\end{example}
		
\begin{example}\label{ex:M11}
{\rm
Let $T=\M_{11}=HK$, where $H=\M_{10}=\A_6.2$ and $K=\ZZ_{11}$.
Pick an element $\rho\in K$, of order 11.
Then, for any involution $z\in T$, either $\l\rho,z\r=\M_{11}$ or $\l\rho,z\r=\PSL(2,11)$, see \cite{Atlas}.
By MAGMA~\cite{MAGMA}, the maximal subgroup of $\M_{11}$ which contains $\rho$ is unique and is isomorphic to $\PSL(2,11)$.
				
Moreover, $\M_{11}$ contains ${2^4\cdot3^2\cdot5\cdot11\over 48}=3\cdot5\cdot11$ involutions, and $\PSL(2,11)$ contains ${2^2\cdot3\cdot5\cdot11\over 12}=5\cdot11$ involutions.
Thus there exist involutions $z\in T$ such that $\l\rho,z\r=T$, and so the pair $(\rho,z)$ is a rotary pair.
}
\qed\end{example}

		\begin{example}\label{ex:M23}
			{\rm
				Let $T=\M_{23}=HK$, where $H=\M_{22}$ and $K=\ZZ_{23}$.
				Pick an element $\rho\in K$, of order 23.
				Then, for any involution $z\in T$, we have $\l\rho,z\r=T$, see \cite{Atlas}.
			}
		\qed\end{example}

\begin{example}\label{ex:PSL}
{\rm
Let $T=\PSL(d,q)=HK$, where $d$ is a prime,  $\gcd(d,q-1)=1$, $H=\AGL(d-1,q)$ and $K=\ZZ_{q^{d}-1\over q-1}$.
Pick an element $\rho\in K$, of order ${q^{d}-1\over q-1}$.
In the case where $d>2$, each involution $z\in T$ is such that $\l\rho,z\r=T$, and thus $(\rho,z)$ is a rotary pair for $T$.
In the case where $d=2$, each involution $z\notin \N_T(K)\cong\D_{2(q+1)}$ is such that $\l\rho,z\r=T$, and so $(\rho,z)$ is a rotary pair for $G=\PSL(2,q)$. Note that there are $q^2-1$ involutions in $\PSL_2(q)$ and $q+1$ involutions in $\D_{2(q+1)}$. Therefore, such $z$ exists.
}
\qed\end{example}

\begin{example}\label{ex:PSL.2}
{\rm
Let $G=\PSL(d,q){:}\l\phi\r=HK$, where $d$ is a prime, $\gcd(d,q-1)=1$, $\phi$ is a field automorphism of order $2$, $H=\AGL(d-1,q){:}\l\phi\r$ and $K=\ZZ_{q^{d}-1\over q-1}$.
Choose an element $\rho\in K$ of order ${q^{d}-1\over q-1}$,
and let $q=q_0^2$.
Pick $x$ to be an involution of $\PSL(d,q_0)$ such that $x\notin \N_{\PSL(d,q)}(K)\cong K{:}\ZZ_d$.
Note that $\PSL(d,q_0)$ is centralized by $\phi$.
Then  $x\phi$ is an involution in $G$, and $\l\rho,x\phi\r=G$. Thus $(\rho,z)$ is a rotary pair for $G=\PSL(d,q){:}\l z\r$ with $z=x\phi$.
When $d>2$, take $x$ to be any involution of $\PSL(d,q_0)$.
When $d=2$, there exists at most one involution of $\PSL(2,q_0)$ contained in $\N_{\PSL(2,q)}(K)\cong \D_{2(q+1)}$, so that such $x$ exists. If not, assuming  $x_1,x_2$ are such involutions, we have $|\l x_1,x_2\r|\mid \gcd(2(q+1),q_0(q-1))$.
Note that $\gcd(q-1,q+1)=1$ since $q$ is even. We have $\l x_1,x_2\r=\ZZ_2$, a contradiction.
}
\qed\end{example}

The next lemma completely determines almost simple groups $G$.

\begin{lemma}\label{lem:O=2}
The group $G$ is an almost simple group if and only if $G$ is a simple group listed in Hypothesis~$\ref{hypo-1}$, or $G=T{:}\l z\r$ such that either
\begin{itemize}
\item[\rm(i)] $T=\A_p$ and $z$ is an odd permutation in $\S_p$, or

\item[\rm(ii)]  $T=\PSL(d,q)$ and $z=x\phi$, where $\phi$ is a field automorphism of order $2$, and $x^\phi=x^{-1}$.
\end{itemize}
\end{lemma}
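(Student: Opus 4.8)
The plan is to prove Lemma~\ref{lem:O=2} by combining the structural classification already established with the explicit rotary-pair constructions given in Examples~\ref{ex:S(p)}--\ref{ex:PSL.2}. First I would observe that when $G$ is almost simple, the socle decomposition $N=T_1\times\dots\times T_r$ from the proof of Theorem~\ref{skewG} forces $r=1$, so $G=T.\calO$ with $T$ a single simple group from Hypothesis~\ref{hypo-1} and $\calO\le\Out(T)$. Lemma~\ref{lem:O=<2} then restricts $\calO$ to $1$ or $\ZZ_2$, so the only two possibilities are $G=T$ (a simple group in the table) or $G=T{:}\l z\r$ with $|z|=2$. The task therefore reduces to determining, for each simple $T$ in Hypothesis~\ref{hypo-1}, precisely which order-$2$ outer elements $z$ can occur in a rotary pair $(\rho,z)$.

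\textbf{The two directions.}

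For the \emph{sufficiency} direction, I would verify that each group listed does admit a rotary pair. The simple cases $\A_p$, $\PSL(2,11)$, $\M_{11}$, $\M_{23}$, $\PSL(d,q)$ and the extension $\PSL(d,q){:}\l\phi\r$ are handled directly by Examples~\ref{ex:Alt(p)}, \ref{ex:L(2,11)}, \ref{ex:M11}, \ref{ex:M23}, \ref{ex:PSL}, and \ref{ex:PSL.2} respectively; for $G=\A_p{:}\l z\r=\S_p$ with $z$ an odd permutation, Example~\ref{ex:S(p)} gives the rotary pair. For the \emph{necessity} direction, I would argue that $\Out(T)$ is so constrained that the only order-$2$ outer involutions available are exactly those in (i) and (ii). For $T=\A_p$, $\Out(\A_p)=\ZZ_2$ (for $p\ge 5$), generated by conjugation by an odd permutation, giving case (i) with $G=\S_p$. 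For the sporadic and small linear groups $\PSL(2,11)$, $\M_{11}$, $\M_{23}$, one checks $\Out(T)=1$ or $\ZZ_2$ via the ATLAS \cite{Atlas}; in each case the order-$2$ outer automorphism does or does not extend to a rotary pair, and I would confirm that the listed entries are exhausted. For $T=\PSL(d,q)$, the outer automorphisms of order $2$ arise either from a field automorphism $\phi$ of order $2$ (when $q$ is a square) or from the graph-inverse automorphism; the condition that $z$ lie in a rotary pair with a generator $\rho$ of the Singer-type cycle $K$ forces $z$ to normalize or invert $K$ appropriately, which pins $z$ down to the form $x\phi$ with $x^\phi=x^{-1}$ as in (ii).

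\textbf{The main obstacle.}

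I expect the crux to lie in the $\PSL(d,q)$ case (ii): one must show that every admissible order-$2$ coset representative in $\PSL(d,q){:}\l\phi\r$ can be written as $z=x\phi$ with $x$ an involution satisfying $x^\phi=x^{-1}$, and conversely that such $z$ genuinely gives $\l\rho,z\r=G$ rather than a proper subgroup. The forward implication requires analyzing the coset $\PSL(d,q)\phi$ and determining which elements square to the identity; writing a general element as $y\phi$, the condition $(y\phi)^2=1$ becomes $y\cdot y^\phi=1$, i.e.\ $y^\phi=y^{-1}$, after which one must exhibit the involution $x$ in the twisted form. The generation claim $\l\rho,z\r=G$ is precisely what Example~\ref{ex:PSL.2} supplies, using that $\PSL(d,q_0)$ (the fixed subfield subgroup) is centralized by $\phi$ and contains an involution outside $\N_{\PSL(d,q)}(K)$. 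I would lean on the ATLAS \cite{Atlas} and, where needed, MAGMA \cite{MAGMA} computations for the finitely many sporadic and small cases, so that the genuinely infinite families $\A_p$ and $\PSL(d,q)$ are the only ones requiring a uniform structural argument.
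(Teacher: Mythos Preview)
Your sufficiency direction matches the paper's. For necessity, however, you are taking a much longer route than needed and your sketch contains errors. The key shortcut you are missing is that Lemma~\ref{2.6} already classifies the \emph{whole} almost simple group $G$ (not just its socle $T$) admitting the Hall factorization $G=HK$ with $K$ cyclic: reading off Hypothesis~\ref{hypo-1}, the only non-simple entries are $\S_p$ and $\PSL(d,q){:}\l\phi\r$ with $\phi$ a field automorphism. So graph automorphisms, $\PGL(2,11)$, and extensions of $\M_{11}$ or $\M_{23}$ are already excluded --- there is no case analysis of $\Out(T)$ to perform. Once you know $G=\PSL(d,q){:}\l\phi\r$, Lemma~\ref{lem:O=<2} gives $\rho\in T$, whence $z\notin T$ (otherwise $\l\rho,z\r\le T<G$), so $z=x\phi$ for some $x\in T$, and $z^2=1$ yields $x\phi x\phi=1$, i.e.\ $x^\phi=x^{-1}$. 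That is the entire necessity argument in the paper.

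Two specific problems in your sketch: first, the claim that being in a rotary pair ``forces $z$ to normalize or invert $K$ appropriately'' is unfounded --- nothing in the definition of a rotary pair requires $z$ to normalize $\l\rho\r$, and the paper makes no such use of it. Second, after you correctly derive $y^\phi=y^{-1}$ from $(y\phi)^2=1$, there is no further ``involution $x$ in the twisted form'' to exhibit: $x$ is simply $y$, and the lemma does not assert that $x$ itself is an involution (in Example~\ref{ex:PSL.2} the chosen $x$ happens to be one, but that is a construction for sufficiency, not a constraint in necessity).
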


\begin{proof}
The sufficiency has been confirmed by the above examples.

Next we verify the necessity, so that assume that $G$ is an almost simple group.
Then by Lemma~\ref{2.6}, $G$ is one of the almost simple groups listed in Hypothesis~\ref{hypo-1}.
Assume further that $G$ is not simple, and $G\not=\S_p$ with $p$ prime.
Then
\[G=\PSL(d,q){:}\l \phi\r,\]
where $\phi$ is a field automorphism of order $2$.
In this case, $\rho \in T$ and $z\notin T$, where $T=\PSL(d,q)$.  Since $G=T{:}\l\phi\r=T{:}\ZZ_2$, we have $G=T{:}\l z\r$.
It follows that $z=x\phi$ with $x\in T$, and $1=z^2=x\phi x\phi$, so  $\phi^{-1}x\phi=x^{-1}$.
\end{proof}

The following lemma classifies the groups $G$ in the general case.

\begin{lemma}\label{lem:exist}
Letting $T_0{:}\l z_0\r=1$, there exists $s$ with $0\leqslant s\leqslant r$ such that
\[G=\left((T_0\times\dots\times T_s){:}\l (z_0,\dots, z_s)\r\right)\times T_{s+1}\times\dots\times T_r,\]
where $(T_i,z_i)$ is a pair given in Lemma \ref{lem:O=2}.
Further, let $(\rho_i,z_i)$ be a rotary pair of $T_i{:}\l z_i\r$ for $i\leqslant s$ and a rotary pair of $T_i$ for $i>s$, and let $\rho=(\rho_1,\dots,\rho_r)$ and $z=(z_1,\dots, z_r)$.
Then $(\rho,z)$ is a rotary pair of $G$.
\end{lemma}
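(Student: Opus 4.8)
The plan is to combine the almost-simple classification (Lemma~\ref{lem:O=2}) with the direct-product decomposition already extracted from Theorem~\ref{skewG}, so that $G$ is reassembled factor-by-factor from its socle $M=T_1\times\dots\times T_r$. Recall that by the preceding discussion we have $G=(T_1\times\dots\times T_r).\calO$ with each $T_i=H_iK_i$ in Hypothesis~\ref{hypo-1}, and by Lemma~\ref{lem:O=<2} the quotient $\calO=G/M$ is $1$ or $\ZZ_2$, with $\rho\in M$. So there are exactly two cases to handle.

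First I would dispose of the case $\calO=1$, where $G=M=T_1\times\dots\times T_r$ is itself the socle; setting $s=0$ gives the stated form with the $\l z_0\r$-part trivial, and $z=(z_1,\dots,z_r)$ is assembled from involutions $z_i\in T_i$ furnished by Examples~\ref{ex:Alt(p)}--\ref{ex:PSL}. In the case $\calO=\ZZ_2$, write $G=M.\l z\r$ where $z$ projects nontrivially to $\calO$. The key point is to analyze, for each simple factor $T_i$, how the induced outer involution acts: either $z$ centralizes $T_i$ (equivalently, the projection of $\calO$ into $\Out(T_i)$ is trivial), or it induces the order-$2$ outer automorphism $z_i$ classified in Lemma~\ref{lem:O=2}, namely an odd permutation for $T_i=\A_p$ or a field automorphism $\phi_i$ (so that $T_i{:}\l z_i\r$ is one of the almost simple groups in Hypothesis~\ref{hypo-1}). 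Relabel the indices so that $z$ acts nontrivially exactly on $T_1,\dots,T_s$ and centralizes $T_{s+1},\dots,T_r$. Since $\calO$ is cyclic of order $2$, the single outer involution acts diagonally on the first block, which yields precisely
\[
G=\left((T_1\times\dots\times T_s){:}\l (z_1,\dots, z_s)\r\right)\times T_{s+1}\times\dots\times T_r,
\]
matching the claimed structure with the convention $T_0{:}\l z_0\r=1$.

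For the final assertion about the rotary pair, I would verify that $(\rho,z)$ with $\rho=(\rho_1,\dots,\rho_r)$ and $z=(z_1,\dots,z_r)$ generates $G$ and that $\l\rho\r$ is core-free. Each coordinate pair $(\rho_i,z_i)$ generates $T_i{:}\l z_i\r$ (for $i\le s$) or $T_i$ (for $i>s$) by the corresponding Example, and since the factors $T_i$ are pairwise nonisomorphic with pairwise coprime $e(T_i)$, the orders $|\rho_i|=e(T_i)$ are pairwise coprime; hence $\l\rho\r=\l\rho_1\r\times\dots\times\l\rho_r\r$ is cyclic of order $\prod_i e(T_i)$, so $\rho$ is genuinely a single element of the right order and $\l\rho\r$ meets each $T_i$ in the core-free cyclic factor $K_i$. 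A standard Goursat-type argument then shows $\l\rho,z\r$ surjects onto each factor and projects correctly onto the diagonal $\l(z_1,\dots,z_s)\r$, giving $\l\rho,z\r=G$; core-freeness of $\l\rho\r$ follows coordinatewise since each $K_i$ is core-free in $T_i$.

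The main obstacle I anticipate is justifying the \emph{diagonal} form of the outer involution on the first block, i.e.\ that the \emph{same} element $z$ simultaneously realizes the prescribed outer involution $z_i$ on every $T_i$ with $i\le s$, rather than splitting into a direct product of separate extensions. This is forced by $\calO$ being \emph{cyclic} of order $2$ (a single generator $z$, not $\ZZ_2^{\,k}$), but one must check carefully that $z$ can be chosen as an honest involution with $z^2=1$ in each coordinate; here the coprimality $\gcd(|T_i|,e(T_j))=1$ and the explicit involutions of Lemma~\ref{lem:O=2} (odd permutations squaring to $1$, and field automorphisms $\phi_i$ of order exactly $2$ with a compensating $x_i$ satisfying $x_i^{\phi_i}=x_i^{-1}$) make each $z_i$ an involution, so their product $z=(z_1,\dots,z_r)$ is again an involution and the extension is a genuine semidirect product.
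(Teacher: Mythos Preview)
Your proposal is correct and follows essentially the same approach as the paper's proof: both split into the cases $\calO=1$ (take $s=0$) and $\calO=\ZZ_2$, and in the latter distinguish the simple factors according to whether the outer $\ZZ_2$ acts nontrivially, then invoke Lemma~\ref{lem:O=2} to identify each $T_i{:}\l z_i\r$. The only cosmetic difference is that the paper phrases the dichotomy via centralizers (whether $T_i\C_G(T_i)=M$ or not) and passes to the quotient $G/\C_G(T_i)\cong T_i{:}\l z_i\r$, whereas you phrase it via the projection of $\calO$ into $\Out(T_i)$; these are equivalent. For generation of $G$ by $(\rho,z)$ the paper simply appeals to the pairwise non-isomorphism of the $T_i$, which is exactly the Goursat-type argument you sketch; your additional remarks on coprimality of the $|\rho_i|$ and core-freeness of $\l\rho\r$ are correct but a little more than the paper records.
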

\begin{proof}
Assume that $G\le (T_1\times\dots\times T_r).\ZZ_2$. If $G=T_1\times \cdots T_r$, then take $s=0$. Now, assume that $G=(T_1\times \cdots\times T_r).\ZZ_2$.
Then $T_i\C_G(T_i)=T_1\times\dots\times T_r$ for some $i$ with $1\leqslant i\leqslant r$.
Without loss of generality, we may assume that $0<s\leqslant r$ is the largest value such that $T_i\C_G(T_i)=T_1\times\dots\times T_r$ for $1\leqslant i\leqslant s$.
Then
\[G=\left((T_1\times\dots\times T_s).\ZZ_2\right)\times(T_{s+1}\times\dots\times T_r),\]
and $G/\C_G(T_i)\cong T_i.\ZZ_2$.
By Lemmas~\ref{lem:quotient} and \ref{lem:O=2}, we conclude that $G/\C_G(T_i)\cong T_i{:}\ZZ_2=T_i{:}\l z_i\r$, with $|z_i|=2$.
It yields that
$$(T_1\times\dots\times T_s).\ZZ_2=(T_1\times\dots\times T_s){:}\l(z_1,\dots,z_s)\r.$$

Next, we show that $(\rho,z)$ is a rotary pair of $G$. If $s=0$, then $G=T_1\times \cdots\times T_r$. Since $T_i\not\cong T_j$ for $i\neq j$, we have $\l \rho,z\r=G$.
If $s>0$, then $G=(T_1\times\cdots\times T_r).\ZZ_2$. Since $T_i\not\cong T_j$ for $i\neq j$,  
we conclude that $$T_1\times\cdots\times T_r\leq \l \rho,z\r\leq G.$$ 
Note that $z\notin T_1\times\cdots\times T_s$. Therefore, $(\rho,z)$ is a rotary pair of $G$.
\end{proof}

As mentioned in the Introduction, a group $G$ with a rotary pair $(\rho,z)$ determines two different arc-transitive maps:
a rotary map $\RotaMap(G,\rho,z)$ and a bi-rotary map $\BiRoMap(G,\rho,z)$, both of which have underlying graph $\Ga=\Cos(G,\l\rho\r,\l\rho\r z\l\rho\r)$.
In the rest of this section, we decompose the graph $\Ga$ as direct product or bi-direct product of graphs admitting almost simple groups.

Let $\Ga$ and $\Sig$ be graphs with vertex sets $U$ and $V$.
Then the {\it direct product} $\Ga\times\Sig$ is the graph with vertex set $U\times V$ such that
\[\mbox{$(u_1,v_i)\sim(u_{2},v_{2})\Longleftrightarrow$ $u_1\sim u_{2}$ and $v_1\sim v_{2}$},\]
for any $u_i\in U$ and $v_i\in V$, $i=1,2$.

Observe that, if $\Ga$ and $\Sig$ are bi-partite graphs, $\Ga\times\Sig$ is not connected and has two connected components.
Let $U_1$, $V_1$ be the two parts of vertices sets of $\Ga$, and let $U_2$, $V_2$ be the two parts of vertices sets of $\Sig$.
Then the {\it bi-direct product} $\Ga\times_{\rm{bi}} \Sig$ is a bi-partite graph with two parts of
vertex set $U_1\times U_2$ and $V_1\times V_2$ such that
\[\mbox{$(u_1,u_2)\sim (v_1,v_2)\Longleftrightarrow$ $u_1\sim v_1$ and $u_2\sim v_2$,}\]
for any $u_i\in U_i$ and $v_i\in V_i$, $i=1,2$, see \cite{LiMa}.		

\begin{lemma}\label{lem:productmap}
Let $G=\left((T_0\times \dots\times T_s){:}\l (z_0,\dots, z_s)\r\right)\times T_{s+1}\times\dots\times T_r$, and let  $\rho=(\rho_1,\dots,\rho_r)$ and $z=(z_1,\dots, z_r)$, defined as in Lemma~$\ref{lem:exist}$.
Let $\Ga_i$ be the underlying graph of $\RotaMap(T_i{:}\l z_i\r,\rho_i,z_i)$ for $i\leq s$, or of  $\RotaMap(T_i,\rho_i,z_i)$ for $i>s$.
Then the underlying graph of $\RotaMap(G,\rho,z)$ is such that
\[\Cos(G,\l \rho\r,\l\rho\r z\l\rho\r)=(\Ga_1\times_{\rm{bi}}\Ga_2\times_{\rm{bi}}\cdots\times_{\rm{bi}}\Ga_s)\times \Ga_{s+1}\times\cdots\times\Ga_r.\]
\end{lemma}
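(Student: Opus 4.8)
The plan is to reduce the computation of $\Cos(G,\l\rho\r,\l\rho\r z\l\rho\r)$ to a coordinatewise analysis, exploiting that both the point stabilizer $\l\rho\r$ and the connection element $z$ split along the direct factors of the socle. First I would record, from item (ii) preceding Lemma~\ref{lem:O=<2}, that $\l\rho\r=K=K_1\times\dots\times K_r=\l\rho_1\r\times\dots\times\l\rho_r\r$ with $\l\rho_i\r\leqslant T_i$ of pairwise coprime orders. Writing $M=T_1\times\dots\times T_s$, $\hat z=(z_1,\dots,z_s)$ and $D=\l\rho_1\r\times\dots\times\l\rho_s\r$, so that $G=(M{:}\l\hat z\r)\times T_{s+1}\times\dots\times T_r$, the key algebraic fact to establish is that the connection double coset factorizes as
\[\l\rho\r z\l\rho\r=\bigl(D\hat z D\bigr)\times\prod_{i>s}\l\rho_i\r z_i\l\rho_i\r,\]
and that, because $\hat z$ acts coordinatewise as $z_i$ on each $T_i$, one has $D\hat z D=\prod_{i\leqslant s}\l\rho_i\r z_i\l\rho_i\r$ inside the nontrivial coset $M\hat z$.

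Granting this, the vertex set $[G{:}\l\rho\r]$ splits accordingly. For each $i>s$ the $i$-th coordinate contributes the factor $[T_i{:}\l\rho_i\r]=V(\Ga_i)$, and the adjacency condition $yx^{-1}\in\l\rho\r z\l\rho\r$ restricts in these coordinates to $(yx^{-1})_i\in\l\rho_i\r z_i\l\rho_i\r$, i.e.\ to adjacency in $\Ga_i$. This is exactly the defining condition of the direct product, so these coordinates jointly produce $\Ga_{s+1}\times\dots\times\Ga_r$; here I would simply check that the vertex bijection and the adjacency rule agree with the definition of $\times$ given before the lemma.

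The more delicate part concerns the coordinates $i\leqslant s$, which are tied together by the single outer involution $\hat z$. Here I would first observe that each $\Ga_i$ with $i\leqslant s$ is bipartite: since $\rho_i\in T_i$ while $z_i$ is outer, the double coset $\l\rho_i\r z_i\l\rho_i\r$ lies in $T_iz_i$, so $V(\Ga_i)$ splits into the part $U_i=[T_i{:}\l\rho_i\r]$ coming from $T_i$ and the part $V_i$ coming from $T_iz_i$, with all edges running between $U_i$ and $V_i$. Because $\hat z$ flips all of the first $s$ coordinates simultaneously, a coset $Dg$ lies either in $M$ (giving a point of $U_1\times\dots\times U_s$) or in $M\hat z$ (giving a point of $V_1\times\dots\times V_s$); this yields exactly the bipartition required by $\times_{\rm bi}$. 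The adjacency $yx^{-1}\in D\hat z D$ then forces $x,y$ into opposite $\hat z$-cosets and, by the coordinatewise factorization $D\hat z D=\prod_{i\leqslant s}\l\rho_i\r z_i\l\rho_i\r$, requires each coordinate to be $\Ga_i$-adjacent, which is precisely the defining condition of the bi-direct product. Combining the two parts gives the claimed decomposition, the convention $T_0{:}\l z_0\r=1$ covering the case $s=0$.

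I expect the main obstacle to be this bi-direct part: one must track the conjugation action of the outer involution through $D\hat z D$ (using $\hat z^2=1$, so that $\hat z d=d^{\hat z}\hat z$ for $d\in D$), confirm that it factorizes cleanly over $i\leqslant s$, and verify that the induced identification of the $T_iz_i$-part $V_i$ with $[T_i{:}\l\rho_i\r]$ is compatible with the $\Ga_i$-adjacency on the $V$-side. Since the direct product of bipartite graphs is disconnected, the essential point is that the bipartition induced by $\hat z$ picks out precisely the component $\Ga_1\times_{\rm bi}\dots\times_{\rm bi}\Ga_s$, rather than some other component. Once the double-coset factorization is in hand, the remaining steps are routine bookkeeping.
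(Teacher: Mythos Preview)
Your proposal is correct and follows essentially the same approach as the paper: both arguments exploit the coordinatewise splitting $\l\rho\r=\l\rho_1\r\times\dots\times\l\rho_r\r$ (from the coprimality of the $|\rho_i|$) to factor the double coset $\l\rho\r z\l\rho\r$ and then match the resulting vertex and adjacency data against the definitions of $\times$ and $\times_{\rm bi}$. The only organizational difference is that the paper treats two factors at a time and inducts on $r$, whereas you handle all $r$ coordinates simultaneously; the verifications involved are identical.
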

\begin{proof}
{\bf (1).}\ First, assume  that $G=T_1\times T_2$.
Then $\rho=(\rho_1,\rho_2)$, and $G_\a=\l\rho\r=\l\rho_1\r\times\l\rho_2\r$ as $\gcd(|\rho_1|,|\rho_2|)=1$.
Hence
\[V=[G:G_\a]=[(G_1\times G_2):(\l\rho_1\r\times \l\rho_2\r)] =[G_1:\l \rho_1\r]\times[G_2:\l\rho_2\r].\]
For any two vertices $\l\rho\r (s_1,s_2)$, $\l \rho\r (t_1,t_2)$ in $V(\Ga)$, we have
\[\begin{array}{rcl}
\l(\rho_1,\rho_2)\r (s_1,s_2)\sim\l (\rho_1,\rho_2)\r (t_1,t_2)&\Longleftrightarrow&
(s_1,s_2)(t_1^{-1},t_2^{-1})\in \l(\rho_1,\rho_2)\r z_1z_2\l(\rho_1,\rho_2)\r \\
&\Longleftrightarrow& s_1t_1^{-1}\in \l\rho_1\r z_1\l\rho_1\r, \ s_2t_2^{-1}\in \l\rho_2\r z_2\l\rho_2\r, \\
&\Longleftrightarrow& \l\rho_1\r s_1\sim\l \rho_1\r t_1, \ \l\rho_2\r s_2\sim\l \rho_2\r t_2\\
\end{array}\]
By definition, we conclude that $\Ga=\Ga_1\times\Ga_2$.

Suppose now that $G=T_1\times\dots\times T_r$, with $r\geqslant3$.
Let $X=T_1\times\dots\times T_{r-1}$.
Then $G=X\times T_r$.
Let $\rho'=(\rho_1,\dots,\rho_{r-1})$, and $z'=(z_1,\dots,z_{r-1})$.
Then $(\rho',z')$ is a rotary pair for $X$, and defines a graph $\Sig=\Cos(X,\l\rho'\r,\l z'\r)$.
By the previous paragraph, we conclude that $\Ga=\Sig\times\Ga_r$.
By induction, $\Sig=\Ga_1\times\dots\times \Ga_{r-1}$.
It follows that $\Ga=\Sig\times\Ga_r=(\Ga_1\times\dots\times \Ga_{r-1})\times\Ga_r=\Ga_1\times\dots\times \Ga_{r-1}\times\Ga_r$.

{\bf (2).}\ Next, assume that $G=(T_1\times T_2){:}\l(z_1,z_2)\r$, where $|z_1|=|z_2|=2$.
Let $G_i=T_i{:}\l z_i\r$, and $\Ga_i=\Cos(T{:}\l z_i\r,\l\rho_i\r, \l\rho_i\r z_i\l\rho_i\r)$, where $i=1$ or 2.
Let $\Ga=\Cos(G{:}\l z\r,\l\rho\r, \l\rho\r z\l\rho\r)$, where $\rho=(\rho_1,\rho_2)$ and $z=(z_1,z_2)$.
Then $\Ga_i$ and $\Ga$ are bipartite graphs.
Let 
$$\begin{array}{l}
U=\{\l \rho\r(s_1,s_2)\mid (s_1,s_2)\in T_1\times T_2\}, \text{ and } V=\{\l\rho\r z(t_1,t_2) \mid (t_1,t_2)\in T_1\times T_2\},\\
U_i=\{\l \rho_i\r s_i\mid s_i\in T_i\}, \text{ and } V_i=\{\l\rho_i\r z_i t_i \mid t_i\in T_2\}, \mbox{where $i=1$ or 2}.
\end{array}$$
		
Notice that a vertex in $V$ has the form $\l \rho\r z(t_1,t_2)=\l \rho\r (z_1t_1,z_2t_2)$.
Then, for any vertices $\l\rho\r (s_1,s_2)\in U$ and $\l \rho\r (z_1t_1,z_2t_2)\in V$, we have that
\[\begin{array}{rcl}
\l\rho\r (s_1,s_2)\sim\l \rho\r (z_1t_1,z_2t_2)&\Longleftrightarrow&
(s_1,s_2)(z_1t_1,z_2t_2)^{-1}\in \l(\rho_1,\rho_2)\r (z_1,z_2)\l(\rho_1,\rho_2)\r \\
&\Longleftrightarrow& s_1t_1^{-1}z_1\in \l\rho_1\r z_1\l\rho_1\r, \mbox{and}\ s_2t_2^{-1}z_2\in \l\rho_2\r z_2\l\rho_2\r, \\
&\Longleftrightarrow& \l\rho_1\r s_1\sim\l \rho_1\r z_1t_1, \mbox{and}\ \l\rho_2\r s_2\sim\l \rho_2\r z_2t_2.\\
\end{array}\]
By definition, we conclude that $\Ga=\Ga_1\times_{\rm{bi}}\Ga_2$.

Now let $G=(T_1\times\dots\times T_s){:}\l (z_1,\dots, z_s)\r=(X\times T_r){:}\l(z',z_r)\r$.
Let $\Ga'=\Cos(X{:}\l z'\r,\l\rho'\r, \l\rho'\r z'\l\rho'\r)$, where $\rho'=(\rho_1,\dots,\rho_{r-1})$.
Arguing as in the previous paragraph shows that $\Ga=\Ga'\times_{\rm bi}\Ga_r$.
By induction, we may assume that $\Ga'=\Ga_1\times_{\rm bi}\dots\times_{\rm bi}\Ga_{r-1}$.
Thus $\Ga=\Ga'\times_{\rm bi}\Ga_r=\Ga_1\times_{\rm bi}\dots\times_{\rm bi}\Ga_{r-1}\times_{\rm bi}\Ga_r$.

{\bf (3).}\ Assume that $G=\left((T_1\times\dots\times T_s){:}\l (z_1,\dots, z_s)\r\right)\times (T_{s+1}\times\dots\times T_r)$, where $1<s<r$.
Let $\rho'=(\rho_1,\dots,\rho_s)$, $\rho''=(\rho_{s+1},\dots,\rho_r)$, and let $z'=(z_1,\dots, z_s)$, and $z''=(z_{s+1},\dots,z_r)$.
Let $X=T_1\times\dots\times T_s$ and $Y=T_{s+1}\times\dots\times T_r$.
Then $G=(X{:}\l z'\r)\times Y$.
Let $\Ga'=\Cos((X{:}\l z'\r,\l\rho'\r z'\l\rho'\r)$, and let $\Ga''=\Cos((Y,\l\rho''\r z''\l\rho''\r)$.
Arguing as in the first paragraph of the proof shows that $\Ga=\Cos(G,\l\rho\r z\l\rho\r)=\Ga'\times\Ga''$.
Further, $\Ga'=\Ga_1\times_{\rm{bi}}\cdots\times_{\rm{bi}}\Ga_s$ by (1), and $\Ga''=\Ga_{s+1}\times\cdots\times\Ga_r$ by (2).
So
$\Ga=\Ga'\times\Ga''=(\Ga_1\times_{\rm{bi}}\Ga_2\times_{\rm{bi}}\cdots\times_{\rm{bi}}\Ga_s)\times (\Ga_{s+1}\times\cdots\times\Ga_r)$.
\end{proof}

\vskip0.1in
Finally, combining Lemma~\ref{lem:exist} and Lemma~\ref{lem:productmap}, {\bf the proof of Theorem~\ref{thm:maps}} is completed.
\qed

		
	\end{document}